\numberwithin{equation}{section}
\theoremstyle{plain}                    
\newtheorem{thm}{Theorem}[section]
\newtheorem{lem}[thm]{Lemma}
\newtheorem{prop}[thm]{Proposition}
\theoremstyle{definition}
\theoremstyle{remark}
\newtheorem{rem}[thm]{Remark}      
\newtheorem{ack}{Acknowledgment}        
\def\R{{\mathbb R}}
\def\N{{\mathbb N}}
\def\C{{\mathbb C}}
\def\E{{\mathbb E}}
\def\Lip{\mbox{\rm Lip}}
\def\1{\mbox{I\hspace{-.6em}1}}
\def\FT{{\mathcal F}}
\newcommand{\be}{\begin{equation}}
\newcommand{\bea}{\begin{eqnarray}}
\newcommand{\eea}{\end{eqnarray}}
\newcommand{\bean}{\begin{eqnarray*}}
\newcommand{\eean}{\end{eqnarray*}}
\DeclareMathOperator{\Var}{Var} 
\DeclareMathOperator{\Exp}{Exp}
\providecommand{\eps}{\varepsilon}
\renewcommand{\phi}{\varphi}
\renewcommand{\theta}{\vartheta}
\renewcommand{\cdot}{{\scriptstyle \bullet} }
\providecommand{\abs}[1]{\lvert #1 \rvert}
\providecommand{\norm}[1]{\lVert #1 \rVert}
\providecommand{\babs}[1]{{\Bigl\lvert #1 \Bigr\rvert}}
\renewcommand{\Re}{\operatorname{Re}}
\renewcommand{\Im}{\operatorname{Im}}
\renewcommand{\le}{\leqslant}\renewcommand{\leq}{\leqslant}
\renewcommand{\ge}{\geqslant}\renewcommand{\geq}{\geqslant}
\newcommand{\cit}[1]{\citeasnoun{#1}}
\begin{document}

\thispagestyle{empty}

\baselineskip14pt

\vspace*{0.6cm}

\begin{center}
{\large \sc Nonparametric estimation for L\'evy processes\\ from low-frequency
observations}
\end{center}

\vspace*{0.6cm}

\begin{center}
Michael H.~Neumann  \\
\noindent Friedrich-Schiller-Universit\"at Jena \\
Institut f\"ur Stochastik\\
Ernst-Abbe-Platz 2\\
D -- 07743 Jena, \ Germany\\
E-mail: mneumann@mathematik.uni-jena.de\\[0.4cm]
Markus Rei{\ss}\\
Ruprecht-Karls-Universit\"at Heidelberg\\
Institut f\"ur Angewandte Mathematik\\
Im Neuenheimer Feld 294\\
D -- 69120 Heidelberg, \ Germany\\
E-mail: reiss@statlab.uni-heidelberg.de\\[1.5cm]
\end{center}

\vspace*{-1.0cm}

\begin{center}
{\bf Abstract}
\end{center}
\begin{quote}
We suppose that a L\'evy process is observed at discrete time
points. A rather general construction of minimum-distance estimators
is shown to give consistent estimators of the L\'evy-Khinchine
characteristics as the number of observations tends to infinity,
keeping the observation distance fixed. For a specific
$C^2$-criterion this estimator is rate-optimal. The connection with
deconvolution and inverse problems is explained. A key step in the
proof is a uniform control on the deviations of the empirical
characteristic function on the whole real line.
\end{quote}

\vspace*{3cm}

\footnoterule \noindent {\sl 2000 Mathematics Subject
Classification}. Primary 62G15;
secondary 62M15.\\
{\sl Keywords and Phrases}. L\'evy-Khinchine characteristics, density estimation, minimum distance estimator, deconvolution.\\
{\sl Short title}. Nonparametric estimation for L\'evy processes.

\newpage

\renewcommand{\baselinestretch}{1.2}
\small\normalsize

\pagestyle{headings} \setcounter{page}{1}

\section{Introduction}
\label{S1}

L\'evy processes form the fundamental building block for stochastic
continuous-time models with jumps. There is an important trend using
L\'evy models in finance, see \cit{CT04}, but also many recent
models in physics or biology rely on L\'evy processes. We consider
here the problem of estimating the L\'evy-Khintchine characteristics
from time-discrete observations of a L\'evy process. Since these
characteristics involve the L\'evy measure (or jump measure) and we
do not want to impose a parametric model, we face a nonparametric
estimation problem.

When the L\'evy process $(X_t)_{t\geq 0}$ is observed at high frequency, at
times $(t_i)_{i=0,\ldots,n}$ with $\max_i(t_i-t_{i-1})$ small, then
a large increment $X_{t_i}-X_{t_{i-1}}$ indicates that a jump
occurred between time $t_{i-1}$ and $t_i$. Based on this insight and
the continuous-time observation analogue, nonparametric inference
for L\'evy processes from high-frequency data has been considered by
\cit{BB82}, \cit{FH06} and \cit{Nish07}. For low-frequency
observations, however, we cannot be sure to what extent the
increment $X_{t_i}-X_{t_{i-1}}$ is due to one or several jumps or
just to the Brownian motion part of the L\'evy process. The only way
to draw inference is to use that the increments form independent
realisations of infinitely divisible probability distributions. We
shall assume that we dispose of equidistant observations at
$t_i=i\Delta$, $i=0,\ldots,n$, and consider the asymptotic behaviour
of estimators for $n\to\infty$ and $\Delta>0$ fixed. This can be
cast into the classical framework of i.i.d.~observations
$(X_{i\Delta}-X_{(i-1)\Delta})_{i=1,\ldots,n}$ from an infinitely
divisible distribution. A natural question in this framework is to
estimate the underlying L\'evy-Khintchine characteristics. In this
general setting we are only aware of the work by \cit{WK03} who propose
and implement an approach for estimating the jump distribution by a
fixed spectral cut-off procedure, which is related to the pilot
estimator in Section \ref{S4bis} below. In the special case of
compound Poisson processes the problem of estimating the jump
density is known as decompounding, see \cit{EGS07}, \cit{Gug07} and
the references therein. For parametric inference under the
assumption of a stable law see e.g. \cit{FM81b}. A related
low-frequency problem for the canonical function in
L\'evy-Ornstein-Uhlenbeck processes has been considered by
\cit{Hol05}, where a consistent estimator has been constructed.

In Section \ref{S2} we recall basic facts about L\'evy processes and
prepare the idea of minimum-distance estimators based on the
empirical characteristic function. Under very general conditions we
then show in Section \ref{S3} consistency of these estimators for
the L\'evy-Khintchine characteristics. The only way to achieve this
is to merge the diffusion coefficient~$\sigma^2$ and the L\'evy
measure $\nu$ to a single quantity $\nu_\sigma$, which is a finite
Borel measure, and to consider weak convergence of estimators of
$\nu_\sigma$. In Section \ref{S4} we construct a rate-optimal
estimator using a minimum-distance fit, based on a $C^2$-criterion
for the empirical characteristic function. A fundamental tool is
Theorem \ref{T4.1}, which gives a uniform control on the deviations
of the empirical characteristic function on the whole real line and
may be of independent interest. The optimal rates of convergence
depend on the decay of the characteristic function as in
deconvolution problems. Interestingly, our estimator attains the
optimal rates without knowing this decay behaviour and without any
further regularisation parameter. In Section \ref{S4bis} we briefly
discuss the implementation of the estimator, using a two-step
procedure, and show a typical numerical example. Most proofs are
postponed to Section \ref{S5}.

\section{Basic notions, assumptions, and a few simple facts}
\label{S2}

We assume that we observe a one-dimensional L\'evy process
$(X_t)_{t\geq 0}$ at equidistant time points $0=t_0<t_1<\cdots
<t_n$. Such a process is characterized by its characteristic
function
\[
\phi(u,t;\bar b,\sigma,\nu) \,:=\, \E[\exp(iu X_t)] \,=\, \exp(t \; \Psi(u;\bar
b,\sigma,\nu)), \qquad u\in\R,
\]
where
\[
\Psi(u) \,=\, \Psi(u;\bar b,\sigma,\nu) \,=\, iu\bar b \,-\, \frac{\sigma^2}{2}
u^2 \,+\, \int_{\R} \left(e^{iux} \,-\, 1 \,-\, \tfrac{iux}{1+x^2}\right) \,
\nu(dx).
\]
The triplet $(\bar b,\sigma,\nu)$ is called L\'evy-Khintchine characteristic or
characteristic triplet with drift-like part $\bar b\in\R$, volatility
$\sigma\geq 0$ and jump measure $\nu$, which is a non-negative $\sigma$-finite
measure on $(\R,{\mathcal B})$ with $\int \frac{x^2}{1+x^2}\, \nu(dx)<\infty$.
The function $\Psi$ is called characteristic exponent or cumulant function.

For reasons explained below, we introduce a measure $\bar\nu_\sigma$
by
\[ \bar\nu_\sigma(dx) \,=\, \sigma^2 \delta_0(dx) \,+\,
\frac{x^2}{1+x^2} \, \nu(dx),
\]
where $\delta_0$ denotes the point measure in zero. This gives
another representation of $\Psi$ in terms of $\bar b\in\R$ and the
finite Borel measure $\bar\nu_\sigma$ as
\[ \Psi(u) \,=\,
\Psi(u;\bar b,\bar\nu_\sigma) \,=\, iu\bar b \,+\, \int_\R \frac{(e^{iux} -
1)(1+x^2) - iux}{x^2} \, \bar\nu_\sigma(dx).
\]
Here we have used the continuous extension of the integrand at
$x=0$, which evaluates to $-u^2/2$. Let $P_{\bar b,\bar\nu_\sigma}$
denote the probability distribution with characteristic function
$\phi(\cdot,t;\bar b,\bar\nu_\sigma)=\exp(t \, \Psi(\cdot;\bar{b}, \bar{\nu}_\sigma))$
for some fixed $t>0$. Writing
$\mu_n\Longrightarrow \mu$ for weak convergence of the finite Borel
measures $\mu_n$ to the finite Borel measure $\mu$ on $(\R,{\mathcal B})$, the
following well-known result will be essential in the sequel
(Theorem~VII.2.9 and Remark~VII.2.10 in \citeasnoun{JS02} or
Theorem~19.1 in \citeasnoun{GK68}).

\begin{prop} \label{P2.1}
The convergence $P_{\bar{b}_n,\bar{\nu}_{\sigma,n}} \Longrightarrow P_{\bar
b,\bar\nu_\sigma}$ takes
place if and only if $\bar b_n\to \bar b$ and $\bar\nu_{\sigma,n}\Longrightarrow\bar\nu_\sigma$. \\
\end{prop}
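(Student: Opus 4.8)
This is a classical fact (it is Theorem~VII.2.9 together with Remark~VII.2.10 in Jacod--Shiryaev, or Theorem~19.1 in Gnedenko--Kolmogorov), so the plan is only to indicate the structure, proving the two implications separately. For the easy ``if'' direction I would first record that, for each fixed $u\in\R$, the integrand of the $\bar\nu_\sigma$-representation, $g_u(x):=\frac{(e^{iux}-1)(1+x^2)-iux}{x^2}$ with $g_u(0):=-u^2/2$, lies in $C_b(\R)$: it extends continuously at $0$, and for large $|x|$ one has $g_u(x)=(e^{iux}-1)(1+x^{-2})-iu/x$, which is bounded. Then $\bar b_n\to\bar b$ and $\bar\nu_{\sigma,n}\Longrightarrow\bar\nu_\sigma$ immediately give $\Psi(u;\bar b_n,\bar\nu_{\sigma,n})=iu\bar b_n+\int g_u\,d\bar\nu_{\sigma,n}\to iu\bar b+\int g_u\,d\bar\nu_\sigma=\Psi(u;\bar b,\bar\nu_\sigma)$ for every $u$, hence pointwise convergence of $\phi(u,t;\cdot)=\exp(t\Psi(u;\cdot))$ to a function which is a genuine characteristic function, in particular continuous at $0$; L\'evy's continuity theorem then yields $P_{\bar b_n,\bar\nu_{\sigma,n}}\Longrightarrow P_{\bar b,\bar\nu_\sigma}$.

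For the ``only if'' direction, assume $P_n:=P_{\bar b_n,\bar\nu_{\sigma,n}}\Longrightarrow P:=P_{\bar b,\bar\nu_\sigma}$. By L\'evy's theorem the characteristic functions converge uniformly on compacts, and since the characteristic function of an infinitely divisible law never vanishes, passing to distinguished logarithms gives that $\Psi_n(u):=\Psi(u;\bar b_n,\bar\nu_{\sigma,n})$ converges to $\Psi(u):=\Psi(u;\bar b,\bar\nu_\sigma)$, uniformly on compact $u$-sets. Taking real parts, $\int h_u\,d\bar\nu_{\sigma,n}\to\int h_u\,d\bar\nu_\sigma$ uniformly on compacts, where $h_u(x):=\frac{(1-\cos ux)(1+x^2)}{x^2}\ge0$ and $h_u(0)=u^2/2$. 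The main obstacle is then to extract relative compactness of the finite measures $\bar\nu_{\sigma,n}$, and here I would use the Gnedenko--Kolmogorov device of averaging $-\Re\Psi_n$ over a short frequency interval. By Tonelli, $\int H_\delta\,d\bar\nu_{\sigma,n}=\int_{-\delta}^\delta(-\Re\Psi_n(u))\,du\to\int_{-\delta}^\delta(-\Re\Psi(u))\,du$, where $H_\delta(x)=\int_{-\delta}^\delta h_u(x)\,du=2(1+x^2)\,\frac{\delta x-\sin(\delta x)}{x^3}$ (with $H_\delta(0)=\delta^3/3$) is continuous, strictly positive, bounded, and satisfies $H_\delta(x)\to2\delta$ as $|x|\to\infty$. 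Hence $c_\delta:=\inf_xH_\delta(x)>0$, which gives the uniform bound $c_\delta\,\bar\nu_{\sigma,n}(\R)\le\int H_\delta\,d\bar\nu_{\sigma,n}$, bounded in $n$. For tightness, given $\eps>0$ I would choose $\delta$ so small that $2\sup_{|u|\le\delta}(-\Re\Psi(u))<\eps$ (possible since $\Re\Psi$ is continuous with $\Re\Psi(0)=0$) and then $R$ so large that $H_\delta\ge\delta$ on $\{|x|>R\}$, whence $\delta\,\bar\nu_{\sigma,n}(\{|x|>R\})\le\int H_\delta\,d\bar\nu_{\sigma,n}<\delta\eps$ for all large $n$, and enlarge $R$ to absorb the finitely many remaining $n$. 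Thus $\{\bar\nu_{\sigma,n}\}$ is tight with uniformly bounded total mass, hence relatively compact for the weak convergence of finite Borel measures.

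Finally I would identify the limit points. Along an arbitrary subsequence, extract a further one with $\bar\nu_{\sigma,n_k}\Longrightarrow\bar\mu$ for some finite measure $\bar\mu$. Since $g_u\in C_b(\R)$, $iu\bar b_{n_k}=\Psi_{n_k}(u)-\int g_u\,d\bar\nu_{\sigma,n_k}\to\Psi(u)-\int g_u\,d\bar\mu$, and taking $u=1$ shows $\bar b_{n_k}\to b^{\ast}$ for some $b^{\ast}\in\R$. By the ``if'' part already proved, $P_{\bar b_{n_k},\bar\nu_{\sigma,n_k}}\Longrightarrow P_{b^{\ast},\bar\mu}$, while by assumption the same sequence converges to $P_{\bar b,\bar\nu_\sigma}$; by uniqueness of the L\'evy--Khintchine representation (the pair $(\bar b,\bar\nu_\sigma)$ is recovered from the law, since $\bar\nu_\sigma(\{0\})=\sigma^2$ and $\frac{1+x^2}{x^2}\,\bar\nu_\sigma(dx)=\nu(dx)$ on $\R\setminus\{0\}$) this forces $b^{\ast}=\bar b$ and $\bar\mu=\bar\nu_\sigma$. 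Since every subsequence thus contains a further subsequence along which $\bar b_n\to\bar b$ and $\bar\nu_{\sigma,n}\Longrightarrow\bar\nu_\sigma$, the full sequences converge, which completes the proof. The only genuinely delicate point is the tightness/mass bound in the second paragraph; everything else is routine.
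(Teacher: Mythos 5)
Your proof is correct; the only thing to note in the comparison is that the paper itself offers no proof of Proposition~\ref{P2.1} at all --- it is quoted as a known result with references to Theorem~VII.2.9/Remark~VII.2.10 in Jacod--Shiryaev and Theorem~19.1 in Gnedenko--Kolmogorov. What you have written is essentially a reconstruction of the classical argument behind those citations: L\'evy's continuity theorem for the ``if'' direction (using that $g_u$ is bounded and continuous after the extension $g_u(0)=-u^2/2$); and for the ``only if'' direction, locally uniform convergence of the distinguished logarithms $\Psi_n\to\Psi$ (legitimate since the limit is infinitely divisible, hence non-vanishing), followed by the Gnedenko--Kolmogorov averaging device $\int_{-\delta}^{\delta}(-\Re\Psi_n(u))\,du=\int H_\delta\,d\bar\nu_{\sigma,n}$ with $H_\delta$ continuous, bounded, bounded away from zero and tending to $2\delta$ at infinity, which correctly yields both the uniform mass bound and tightness; the identification of limit points via the already-proved sufficiency and the injectivity of $(\bar b,\bar\nu_\sigma)\mapsto P_{\bar b,\bar\nu_\sigma}$ (recovering $\sigma^2=\bar\nu_\sigma(\{0\})$ and $\nu(dx)=\tfrac{1+x^2}{x^2}\bar\nu_\sigma(dx)$ off zero) closes the argument. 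The two standard facts you invoke without proof --- locally uniform convergence of characteristic functions under weak convergence and the continuity of the distinguished logarithm under such convergence --- are exactly the ingredients the cited texts supply, so as a blind proof sketch this is complete in all the ways that matter; there is no gap to report.
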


By the scaling properties of L\'evy processes there is no loss in generality
when we suppose $t_k=k$, $k=0,\ldots,n$. We write $\phi(u;\bar
b,\bar\nu_\sigma)$ short for $\phi(u,1;\bar b,\bar\nu_\sigma)$. Let us
introduce the empirical characteristic function of the increments
\[
\widehat{\phi}_n(u):=\frac{1}{n} \sum_{t=1}^n e^{iu(X_t-X_{t-1})},\qquad
u\in\R.
\]
Since these increments are independent and identically distributed
it follows from the Glivenko-Cantelli theorem that
\begin{equation}
\label{2.1}
P_{\bar b,\bar \nu_\sigma}\left( \widehat{\phi}_n(u)
\,\mathop{\longrightarrow}\limits_{n\to\infty}\, \phi(u; \bar b,\bar
\nu_\sigma) \quad \forall u\in\R \right) \,=\, 1.
\end{equation}

We will consider minimum distance fits, that is, we intend to choose
$\widehat{\bar{b}}_n$ and $\widehat{\bar{\nu}}_{\sigma,n}$ such that, for an
appropriate metric~$d$,
\begin{equation}\label{2.2}
d(\widehat{\phi}_n, \phi(\cdot;\widehat{\bar{b}}_n,
\widehat{\bar{\nu}}_{\sigma,n})) \,=\, \inf_{\tilde{b}\in\R,\,
\tilde{\nu}_\sigma\in {\mathcal M}(\R)} d(\widehat{\phi}_n,
\phi(\cdot;\widetilde{b},\widetilde{\nu}_\sigma)).
\end{equation}
Here ${\mathcal M}(\R)$ denotes the space of all finite Borel
measures on $(\R,{\mathcal B})$. Our basic motivation for this estimation procedure
arises from the fact that an exact maximum likelihood estimator is
not feasible since there is in general no closed form expression for the
probability density of the observations available. Moreover, it is
well-known that methods based on the empirical characteristic
function can be asymptotically efficient; see \citename{FM81a}
\citeyear{FM81a,FM81b}. Since we are not sure that the infimum in
(\ref{2.2}) is always obtained, we take a sequence of positive reals
$(\delta_n)_{n\in\N}$ with $\delta_n\to 0$ as $n\to\infty$ and
choose $\widehat{\bar{b}}_n$ and $\widehat{\bar{\nu}}_{\sigma,n}$
such that
\begin{equation}
\label{2.3}
d(\widehat{\phi}_n, \phi(\cdot;\widehat{\bar{b}}_n,\widehat{\bar{\nu}}_{\sigma,n}))
\,\le\, \inf_{\tilde{b}\in\R,\, \tilde{\nu}_\sigma\in {\mathcal M}(\R)}
d(\widehat{\phi}_n, \phi(\cdot;\widetilde{b},\widetilde{\nu}_\sigma))
\,+\, \delta_n.
\end{equation}
For the metric~$d$, we assume that
\begin{equation}
\label{2.4} \lim_{n\to\infty} d(\widehat{\phi}_n, \phi(\cdot;\bar b,\bar
\nu_\sigma)) \,=\, 0 \qquad P_{\bar b,\bar\nu_\sigma}\text{-almost surely}
\end{equation}
and that the following implication holds:
\begin{eqnarray}
\label{2.5}
\left\{ \begin{array}{ll}
& \lim_{n\to\infty} d(\phi(\cdot;\bar b_n,\bar\nu_{\sigma,n}),
\phi(\cdot;\bar b,\bar\nu_\sigma)) \,=\, 0 \\
\Longrightarrow & \\
& \lim_{n\to\infty}
\int_s^t \phi(u; \bar b_n, \bar\nu_{\sigma,n}) \, du \,=\, \int_s^t \phi(u;
\bar b,\bar\nu_\sigma) \, du \quad \forall s,t\in\R.
\end{array} \right.
\end{eqnarray}

A simple example of such a distance is given by the weighted
$L^p$-norms,
\[ d(\phi_1,\phi_2) \,=\,
\Big(\int_{-\infty}^\infty |\phi_1(u)-\phi_2(u)|^p w(u) \, du \Big)^{1/p},
\]
where $p\geq 1$ and $w:\R\rightarrow (0,\infty)$ is
a continuous weight function with $\int_{-\infty}^\infty
w(u)\,du<\infty$. Then Assumption (\ref{2.4}) follows by dominated
convergence from the convergence result (\ref{2.1}), while
Assumption (\ref{2.5}) is immediate.

\section{Consistency}
\label{S3}

We derive from the triangle inequality, the definition of the
minimum-distance estimator and Assumption~(\ref{2.4}) that
\begin{eqnarray}
\label{3.0}
d(\phi(\cdot; \widehat{\bar{b}}_n, \widehat{\bar{\nu}}_{\sigma,n}),
\phi(\cdot; \bar b,\bar\nu_\sigma)) 
& \leq & d(\phi(\cdot; \widehat{\bar{b}}_n,
\widehat{\bar{\nu}}_{\sigma,n}),
\widehat{\phi}_n) \,+\, d(\widehat{\phi}_n, \phi(\cdot; \bar b,\bar\nu_\sigma)) \nonumber \\
& \leq & 2 d(\widehat{\phi}_n, \phi(\cdot; \bar b,\bar\nu_\sigma)) \,+\, \delta_n \\
& \longrightarrow & 0 \qquad P_{\bar b,\bar\nu_\sigma}\text{-a.s.} \nonumber
\end{eqnarray}
By Assumption~(\ref{2.5}) this implies for the integrated
characteristic function that
\begin{equation}
\label{2.6} P_{\bar b,\bar\nu_\sigma}\left( \int_s^t \phi(u; \widehat{\bar{b}}_n,
\widehat{\bar{\nu}}_{\sigma,n}) \, du
\,\mathop{\longrightarrow}\limits_{n\to\infty}\, \int_s^t \phi(u; \bar b,\bar
\nu_\sigma) \, du \quad \forall s,t\in\R \right) \,=\, 1.
\end{equation}
By Theorem~6.3.3 in \citeasnoun[page~163]{Chu74}, we obtain from
(\ref{2.6}) that
\begin{displaymath}
P_{\widehat{\bar{b}}_n,\widehat{\bar{\nu}}_{\sigma,n}} \,\longrightarrow_v\, P_{\bar b,\bar
\nu_\sigma} \qquad P_{\bar b,\bar \nu_\sigma}\text{-a.s.},
\end{displaymath}
where `$\longrightarrow_v$' denotes vague convergence to a possibly
defective (that is, with a mass less than~1) measure. However, since
this vague limit is a probability measure, it turns out that the
mode of convergence is actually the weak one, that is,
\begin{equation}
\label{2.7} P_{\widehat{\bar{b}}_n,\widehat{\bar{\nu}}_{\sigma,n}}
\,\Longrightarrow\, P_{\bar b,\bar \nu_\sigma} \qquad P_{\bar b,\bar
\nu_\sigma}\text{-a.s.}
\end{equation}

As an immediate consequence of Equation~(\ref{2.7}) and Proposition~\ref{P2.1}
above we obtain the following consistency result for the parameters of the
L\'evy process:

\begin{thm} \label{T3.1}
If the distance $d$ satisfies properties
\eqref{2.4} and \eqref{2.5}, then the minimum distance fit
$(\widehat{\bar{b}}_n,\widehat{\bar{\nu}}_{\sigma,n})$ is a strongly
consistent estimator, that is, with probability one we have for
$n\to\infty$
\begin{eqnarray*}
\widehat{\bar{b}}_n \rightarrow \bar b \qquad \mbox{ and } \qquad
\widehat{\bar{\nu}}_{\sigma,n} \Longrightarrow \bar\nu_{\sigma}. \\
\end{eqnarray*}
\end{thm}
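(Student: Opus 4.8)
The plan is to observe that the statement is essentially a corollary of the material assembled just before it, so the proof amounts to chaining together three already-available facts and then invoking Proposition~\ref{P2.1}. I would introduce nothing new; the only point requiring a moment's care is the passage from vague to weak convergence of the fitted laws.

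First I would start from the triangle inequality and the near-optimality \eqref{2.3} of the minimum-distance fit, which give
\[
d\bigl(\phi(\cdot;\widehat{\bar b}_n,\widehat{\bar\nu}_{\sigma,n}),\phi(\cdot;\bar b,\bar\nu_\sigma)\bigr)\;\le\;2\,d\bigl(\widehat{\phi}_n,\phi(\cdot;\bar b,\bar\nu_\sigma)\bigr)+\delta_n,
\]
and let Assumption~\eqref{2.4} together with $\delta_n\to0$ force the left-hand side to $0$ almost surely under $P_{\bar b,\bar\nu_\sigma}$ (this is display~\eqref{3.0}). Next I would feed this convergence into the deterministic implication~\eqref{2.5}, obtaining convergence of the integrated characteristic functions $\int_s^t\phi(u;\widehat{\bar b}_n,\widehat{\bar\nu}_{\sigma,n})\,du\to\int_s^t\phi(u;\bar b,\bar\nu_\sigma)\,du$ for all $s,t\in\R$, on the same almost-sure event --- the order of quantifiers causes no trouble since \eqref{2.5} is a pointwise implication, not a probabilistic statement, so the exceptional null set is not enlarged. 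Then Chung's convergence theorem in integrated form yields vague convergence $P_{\widehat{\bar b}_n,\widehat{\bar\nu}_{\sigma,n}}\longrightarrow_v P_{\bar b,\bar\nu_\sigma}$, a priori only to a possibly defective limit. Finally, since the identified vague limit is the genuine probability measure $P_{\bar b,\bar\nu_\sigma}$, no mass escapes, so the convergence is in fact weak, i.e.\ \eqref{2.7} holds; and Proposition~\ref{P2.1} then translates weak convergence of the infinitely divisible laws into $\widehat{\bar b}_n\to\bar b$ and $\widehat{\bar\nu}_{\sigma,n}\Longrightarrow\bar\nu_\sigma$ $P_{\bar b,\bar\nu_\sigma}$-a.s.

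The hard part --- to the modest extent that there is one --- is the vague-to-weak upgrade, which in general requires ruling out escape of mass to infinity. Here it is automatic: the vague limit is not an abstract candidate measure but exactly the law $P_{\bar b,\bar\nu_\sigma}$, an infinitely divisible probability measure of total mass one, so no separate tightness or compactness argument is needed. Everything else is bookkeeping with the triangle inequality and Assumptions~\eqref{2.4}--\eqref{2.5}, all of which has in fact already been carried out in the discussion preceding the theorem.
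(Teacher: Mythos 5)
Your argument is correct and follows exactly the chain the paper itself uses: the triangle inequality with \eqref{2.3} and \eqref{2.4} to get \eqref{3.0}, then \eqref{2.5} for the integrated characteristic functions, Chung's theorem for vague convergence, the upgrade to weak convergence because the identified limit is the probability measure $P_{\bar b,\bar\nu_\sigma}$, and finally Proposition~\ref{P2.1}. There is no substantive difference from the paper's proof.
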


\begin{rem}\label{R1}
Without further assumptions we cannot estimate the diffusion parameter $\sigma$
in a uniformly consistent way. We have for example that the stable law with
characteristic function $\phi_\alpha(u)=e^{-\abs{u}^\alpha/2}$ converges for
$\alpha\uparrow 2$ to the standard normal law ($\alpha=2$) in total variation
norm: by Scheff\'e's Lemma it suffices to show pointwise convergence of the
density functions, which follows from the $L^1$-convergence of the
characteristic functions. Hence, for $n$ observations no test can separate the
hypotheses $H_0:\alpha=2$ and $H_1:\alpha<2$. Since we have $\sigma=1$ for
$\alpha=2$ and $\sigma=0$ for $\alpha<2$, this implies for the estimation
problem uniform inconsistency in the following sense:
\begin{displaymath}
\limsup_{n\to\infty} \inf_{\hat{\sigma}_n}
\sup_{\bar b,\, \bar\nu_\sigma} P_{\bar b,\bar
\nu_\sigma}(\abs{\widehat\sigma_n-\sigma}\ge 1/2)>0,
\end{displaymath}
where the infimum is taken over all estimators based on~$n$
observations. Thus, from a statistical perspective the estimation of
the volatility $\sigma$ makes no sense, unless we restrict the class
of L\'evy processes under consideration, e.g.~to the finite
intensity case as in \cit{BR06}.
\end{rem}

The practical implementation of the minimum distance method
raises naturally the question of computational feasibility.
It is certainly not possible to compute $\widehat{\bar{\nu}}_{\sigma,n}$
by an optimisation over the full set ${\mathcal M}(\R)$.
In our simulations, for example, we approximate the measure
$\bar{\nu}_\sigma$ by measures with step-wise constant densities.
To assess the effect of such an approximation, consider a sequence
of subsets ${\mathcal M}^{(n)}\subseteq {\mathcal M}(\R)$
with the density property that there exist measures
$\widetilde{\nu}^{(n)}\in {\mathcal M}^{(n)}$ with
$\widetilde{\nu}^{(n)}\Longrightarrow \bar{\nu}_\sigma$,
as $n\to\infty$.
The definition from (\ref{2.3}) is now replaced by
\[
d(\widehat{\phi}_n, \phi(\cdot;\widehat{\bar{b}}_n,\widehat{\bar{\nu}}_{\sigma,n}))
\,\le\, \inf_{\tilde{b}\in\R,\, \tilde{\nu}_\sigma\in {\mathcal M}^{(n)}}
d(\widehat{\phi}_n, \phi(\cdot;\widetilde{b},\widetilde{\nu}_\sigma))
\,+\, \delta_n.
\]
We obtain instead of (\ref{3.0}) that
\begin{eqnarray*}
d(\phi(\cdot; \widehat{\bar{b}}_n, \widehat{\bar{\nu}}_{\sigma,n}),
\phi(\cdot; \bar b,\bar\nu_\sigma)) 
& \leq & 2 d(\widehat{\phi}_n, \phi(\cdot; \bar b,\bar\nu_\sigma)) \,+\, \delta_n 
\,+\, d( \phi(\cdot; \bar b,\bar\nu_\sigma), \phi(\cdot; \bar b,\widetilde{\nu}^{(n)}) )
\\
& \longrightarrow & 0 \qquad P_{\bar b,\bar\nu_\sigma}\text{-a.s.}
\end{eqnarray*}
Hence, we obtain in complete analogy to Theorem~\ref{T3.1} that with
probability one for $n\to\infty$
\begin{eqnarray*}
\widehat{\bar{b}}_n \rightarrow \bar b \qquad \mbox{ and } \qquad
\widehat{\bar{\nu}}_{\sigma,n} \Longrightarrow \bar\nu_{\sigma}. \\
\end{eqnarray*}

Given the existence of certain moments for $P_{\bar b,\bar\nu_\sigma}$, we could also search our minimum-distance estimator in the class of those parameter values that fit the empirical moments. Using a similar error decomposition
and the consistency of the empirical moments, this approach will also yield consistent estimators under mild conditions on the distance $d$.

\section{A rate-optimal estimator}
\label{S4}

\subsection{The construction}
\label{S4.1}

In this section we intend to devise estimators which attain optimal rates of
convergence. We henceforth restrict the class of L\'evy processes to those with
finite second moments. This is equivalent to requiring that the L\'evy measure
satisfies $\int x^2\nu(dx)<\infty$. In this case the following
reparametrisation of the characteristic exponent is much more convenient:
\[
\Psi(u; b,\sigma,\nu) \,=\, iu b \,-\, \frac{\sigma^2}{2} u^2 \,+\, \int_{\R}
(e^{iux} \,-\, 1 \,-\, iux) \, \nu(dx),
\]
where the parameter $b=\bar b+\int_{\R} (x-\frac{x}{1+x^2}) \nu(dx)$ denotes
now indeed the mean trend because of $\E[X_1]=-i\phi'(0)=b$. Let us mention that this is the original Kolmogorov canonical representation of a L\'evy process \cite{Kol32}, the historial background of which is nicely
exposed by \cit{MR06}. Instead of
$\bar\nu_\sigma$, we consider the finite measure $\nu_\sigma$ defined by
\[
\nu_\sigma(dx) \,=\, \sigma^2 \delta_0(dx) \,+\, x^2 \, \nu(dx),
\]
which allows the nice identity
$\Var(X_1)=-\phi''(0)+\phi'(0)^2=\nu_\sigma(\R)$. From now on, we shall express
the characteristic exponent in terms of $(b,\nu_\sigma)$:
\[
\Psi(u) \,=\, \Psi(u;b,\nu_\sigma) \,=\, iub \,+\, \int_\R \frac{e^{iux} - 1 -
iux}{x^2} \, \nu_\sigma(dx).
\]

While~$b$ can be easily estimated by $\frac{1}{n} \sum_{t=1}^n (X_t - X_{t-1})
=X_n/n$, the construction of an optimal nonparametric estimator of~$\nu_\sigma$
requires more work. Before we start with our search for optimal rates of
convergence for estimators of~$\nu_\sigma$, we have to decide about an
appropriate metric to measure the deviation of any potential
estimator~$\widetilde{\nu}_{\sigma,n}$ from its target~$\nu_\sigma$.

The parameter $\nu_\sigma$ lies in the space of finite Borel
measures, which is naturally equipped with the total variation norm.
As we have seen above in the consistent estimation problem for
$\sigma$, this topology is too strong here. Moreover, we are usually
not interested in the problem of estimating $\nu_\sigma$ itself, but
rather in estimating integrals $\int f\,d\nu_\sigma$ for certain
integrands $f$. In mathematical finance for example, the so-called
$\Delta$ in the quadratic hedging approach requires calculating
$\int \frac{C(t,S(1+z))-C(t,S)}{Sz}\,\nu_\sigma(dz)$, where $C(t,S)$
denotes the option price at time $t$ and $S$ the corresponding stock
price, cf. Proposition~10.5 in \cit{CT04}. This is why we choose to
measure the performance of our estimator by metrizing weak
convergence with certain classes $F$ of continuous test functions
$f$:
\[
l(\widetilde{\nu}_{\sigma,n}, \nu_\sigma) \,=\, \sup\left\{ \left|
\int f\, d\widetilde{\nu}_{\sigma,n} \,-\,  \int f\, d\nu_\sigma
\right|: \; f\in F \right\}.
\]
Note that for any class $F$ of uniformly bounded, equicontinuous
functions consistency with respect to weak convergence implies
$l(\widetilde{\nu}_{\sigma,n}, \nu_\sigma)\to 0$ \cite[Cor.
11.3.4]{Dud89}.
For instance, the bounded Lipschitz metric is generated by
the test functions of Lipschitz norm less than one. 

Let us introduce the Fourier transform for functions $f\in L^1(\R)$
or measures $\mu\in{\mathcal M}(\R)$ by
\[
\FT f(u)=\int f(x)e^{iux}dx,\qquad \FT
\mu(u)=\int e^{iux}\mu(dx),\quad u\in\R.
\]
Note that we have by Parseval's equality
\[
\int f\, d\nu_\sigma \,=\, \frac{1}{2\pi}\int_{-\infty}^\infty \FT
f(u) \overline{\FT \nu_\sigma(u)} \, du,
\]
provided $\FT f\in L^1(\R)$ \cite[Theorem~VI.2.2]{Ka76}.
Estimation of $\nu_\sigma$ turns out to be particularly transparent when we
employ the fact that
\[
\Psi''(u)=\frac{d^2}{du^2}\int\frac{e^{iux}-1-iux}{x^2}\,\nu_\sigma(dx)=-\FT\nu_\sigma(u),
\]
and consequently
\begin{equation}\label{4.1} \FT \nu_\sigma(u) \,=\, -
\frac{d^2}{du^2} \log( \phi(u)) \,=\, \frac{ {\phi'(u)}^2 }{ \phi(u)^2 } \,-\,
\frac{ \phi''(u) }{ \phi(u) }.
\end{equation}
Recall that $\int x^2\nu(dx)<\infty$ implies $\E[X_t^2]<\infty$ and
hence $\phi\in C^2$. Moreover, in order to recover $\Psi$ from $\varphi$
we use the distinguished logarithm of
the complex-valued function $u\mapsto \phi(u)$, which is required to
ensure $\log(\phi(0))=0$ and continuity of $u\mapsto \log(\phi(u))$,
cf.~\cit{CT04}. This formula indicates that estimating $\nu_\sigma$
is strongly related to estimating $\phi$ in a $C^2$-sense. Before we
study rates of convergence, we need to investigate uniform rates of
convergence of the empirical characteristic
function~$\widehat{\phi}_n$ and its derivatives.

\subsection{Estimating the characteristic function}\label{S41b}

For i.i.d.~random variables $(Z_t)_{t\in\N}$, denote by
\[
C_n(u) \,:=\, n^{-1/2} \sum_{t=1}^n \left( e^{iuZ_t} \,-\, \E[e^{iuZ_1}]
\right)
\]
the normalized characteristic function process.
Furthermore, denote by $C_n^{(k)}$ its $k$th derivative which exists
if $\E|Z_1|^k<\infty$. For an appropriate weight function
$w:\R\longrightarrow[0,\infty)$, we consider
\[
\E \| C_n^{(k)} \|_{L_\infty(w)} \,:=\, \E \sup_{u\in\R}
\left\{ |C_n^{(k)}(u)| w(u) \right\}.
\]
For every $k\ge 0$ we have the following general result.

{\thm \label{T4.1}
Suppose that $(Z_t)_{t\in\N}$ are i.i.d.~random
variables with $\E|Z_1|^{2k+\gamma}<\infty$ for some $\gamma>0$
and let the weight function be
defined as $w(u)=(\log(e+|u|))^{-1/2-\delta}$ for some
$\delta>0$. Then
\begin{eqnarray*}
\sup_{n\ge 1} \E \| C_n^{(k)} \|_{L_\infty(w)} \,<\, \infty.\\
\end{eqnarray*}
}

Its proof is given in Section~\ref{SPT4.1}. Let us mention that the logarithmic
decay of the weight function $w$ is in accordance with the well known result
that $\widehat\phi_n\to\phi$ a.s. holds uniformly on intervals $[-T_n,T_n]$
whenever $\log(T_n)/n\to 0$, cf. \cit{Cs83}.

\subsection{Upper risk bounds}

In view of (\ref{4.1}) and Theorem~\ref{T4.1}, we define our estimators of
$b$ and $\nu_\sigma$ by a minimum distance fit based on a
weighted $C^2$-norm. Defining
\[ d^{(2)}(\phi_1, \phi_2)
\,:=\, \sum_{k=0}^2 \| \phi_1^{(k)} \,-\, \phi_2^{(k)} \|_{L^\infty(w)},
\]
we choose the estimators
 $\widehat{b}_n\in\R$ and $\widehat{\nu}_{\sigma,n}\in {\mathcal M}(\R)$
such that
\begin{equation}
\label{4.2}
d^{(2)}\left( \phi(\cdot; \widehat{b}_n,\widehat{\nu}_{\sigma,n}),
\widehat{\phi}_n \right)
\,\le\,
\inf_{\tilde{b}\in\R,\, \tilde{\nu}_\sigma\in {\mathcal M}(\R)}
d^{(2)}\left( \phi(\cdot;
\widetilde{b},\widetilde{\nu}_\sigma), \widehat{\phi}_n \right) \,+\, \delta_n,
\end{equation}
where $\delta_n\to 0$ as $n\to\infty$. We verify by Theorem~\ref{T4.1}
that $d^{(2)}$ satisfies Assumptions (\ref{2.4}) and
(\ref{2.5}), hence, Theorem \ref{T3.1} gives immediately a consistency
result. Moreover, with the choice $\delta_n=O(n^{-1/2})$ these estimators will
turn out to be rate-optimal.

While $b$ can always be estimated at rate $n^{-1/2}$, rates of
convergence of $\int f\, d\widehat{\nu}_{\sigma,n}$ as an estimator
of $\int f\,d\nu_\sigma$ depend both on the smoothness of~$f$ and on
the decay of $|\phi(u)|$ as $|u|\to\infty$. For the function~$f$, we
will assume that it belongs to the class
\[
F_{s} \,:=\, \left\{f: \;\; \int (1+|u|)^s |\FT f(u)|\, du \,\leq\, 1 \right\},
\]
for some $s\ge 0$. Note that $\int |\FT f(u)|\,du\le 1$  implies by the
Riemann-Lebesgue Lemma that~$f$ is continuous with $\|f\|_\infty\le 1$. By
Fourier theory the condition $f\in F_{s}$ is slightly stronger than requiring
$f\in C^s$ with $\norm{f}_{C^s}\le 1$ for a suitable norming of $C^s$. We
therefore introduce a loss function for an estimator $\widehat{\mu}$ of the finite
measure $\mu$ by
\[ \ell_s(\widehat\mu,\mu):=\sup_{f\in F_s} \left| \int f\,
d\widehat{\mu} \,-\, \int f\, d\mu \right|.
\]
Note that by duality the loss $\ell_s$ can be interpreted as a negative
smoothness norm of order~$-s$.

The faster $|\phi(u)|$ decays, the more difficult it will be to estimate
$\nu_\sigma$. We consider in particular the following three cases:
\begin{itemize}
\item[{\bf (a)}] {\bf Gaussian part}\\
If $\sigma^2>0$, then the characteristic function $\phi$ has Gaussian tails,
i.e. 
\[ 
\log | \phi(u) | \,=\, \Re\left( \log \varphi(u) \right)
\,=\, - \sigma^2 u^2/2 (1 \,+\, o(u)) , \qquad \mbox{as } |u|\to\infty.
\]
(To see this, note that $F(x,u):=(e^{iux}-1-iux)/(ux)^2$ is uniformly bounded with
$\lim_{|u|\to\infty}F(x,u)=0$
for $x\neq 0$ such that by dominated convergence 
$\lim_{|u|\to\infty} \int_{\R} F(x,u)x^2\, \nu(dx)=0$
and thus $\log \varphi(u)=-\sigma^2u^2/2+o(u^2)$.)

\item[{\bf (b)}] {\bf Exponential decay}\\
Here the characteristic function $\phi$ decays at most exponentially, i.e. for
some $\alpha>0$, $C>0$,
\[ | \phi(u) | \,\ge\, C e^{-\alpha |u|}, \qquad \mbox{for all } u\in\R.
\]
Examples of distributions with this property include normal inverse
Gaussian \cite[page~117]{CT04}, and generalized tempered stable
distributions \cite[page~122]{CT04}.

\item[{\bf (c)}] {\bf Polynomial decay}\\
In this case the characteristic function satisfies, for some
$\beta\ge 0$, $C>0$,
\[ | \phi(u) | \,\ge\, C(1 \,+\, |u|)^{-\beta}, \qquad \mbox{for all } u\in\R.
\]
Typical examples for this are the compound Poisson distribution, the
gamma distribution, the variance gamma distribution and the
generalized hyperbolic distribution \cite[pages~75, 116, 117,
127]{CT04}.
\end{itemize}

The proof of the following main theorem is postponed to Section~\ref{SPT4.2}.

\begin{thm} \label{T4.2}
Suppose that $\E_{b,\nu_\sigma}|X_1|^{4+\gamma}<\infty$ for some $\gamma>0$. We choose the
weight function~$w$ as $w(u)=(\log(e+|u|))^{-1/2-\delta}$, where $\delta$ is
any positive number. The estimators $\widehat{b}_n$ and
$\widehat{\nu}_{\sigma,n}$ of $b$ and $\nu_\sigma$, respectively, are chosen
according to~(\ref{4.2}) with $\delta_n=O(n^{-1/2})$. Then
\[\E_{b,\nu_\sigma}
|\widehat{b}_n \,-\, b| \,=\, O( n^{-1/2} )
\]
and for any $s>0$
\[
\ell_s(\widehat{\nu}_{\sigma,n},\nu_\sigma) \,=\, O_{P_{b,\nu_\sigma}}\left(
n^{-1/2} \cdot \sup_{u\in [0,U_n]} \left\{
\frac{(1+u)^{2-s}}{w(u)|\phi(u;b,\nu_\sigma)|} \right\} \right),
\]
where
\[U_n:=\inf\left\{u>0: \; \frac{(1+u)^2 n^{-1/2}}{w(u)|\phi(u;b,\nu_\sigma)|}\ge 1 \right\}.
\]
The constants in the risk bounds depend continuously on $\abs{b}$ and
$\nu_\sigma(\R)$. In the specific cases we obtain the following rates of
convergence for $\ell_s(\widehat{\nu}_{\sigma,n},\nu_\sigma)$ in
$P_{b,\nu_\sigma}$-probability:
\begin{description}
\item[{\bf (a) Gaussian part}] $(\log n)^{-s/2}$
\item[{\bf (b) Exponential decay}] $(\log n)^{-s}$
\item[{\bf (c) Polynomial decay of order $\beta\ge 0$}]
$ [(\log n)^{1/2+2\delta} n^{-1/2}]^{s/\beta}\vee n^{-1/2}$.\\
\end{description}
\end{thm}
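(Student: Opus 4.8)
The plan is to reduce the estimation error for $\nu_\sigma$ to the $C^2$-closeness of $\phi(\cdot;\widehat b_n,\widehat\nu_{\sigma,n})$ to $\widehat\phi_n$, then to the $C^2$-closeness of $\widehat\phi_n$ to the true $\phi$, which is controlled by Theorem~\ref{T4.1}. First I would establish the bound for $\widehat b_n$: since $\widehat b_n$ can be read off from the linear term of $\log\phi(\cdot;\widehat b_n,\widehat\nu_{\sigma,n})$ and the minimum-distance property forces $\phi(\cdot;\widehat b_n,\widehat\nu_{\sigma,n})$ to be $d^{(2)}$-close to $\widehat\phi_n$, while $\E|\widehat\phi_n'(0)-\phi'(0)|=O(n^{-1/2})$ by the CLT for the empirical mean $X_n/n$, one gets $\E_{b,\nu_\sigma}|\widehat b_n-b|=O(n^{-1/2})$; alternatively one simply notes $\widehat b_n$ is essentially $X_n/n$ up to the $\delta_n=O(n^{-1/2})$ slack. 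For the main part, by the triangle inequality and the definition~(\ref{4.2}) with $\delta_n=O(n^{-1/2})$,
\[
d^{(2)}\!\left(\phi(\cdot;\widehat b_n,\widehat\nu_{\sigma,n}),\phi(\cdot;b,\nu_\sigma)\right)
\,\le\, 2\,d^{(2)}\!\left(\widehat\phi_n,\phi(\cdot;b,\nu_\sigma)\right)+\delta_n
\,=\, O_{P}(n^{-1/2}),
\]
since $d^{(2)}(\widehat\phi_n,\phi)=\sum_{k=0}^2\|C_n^{(k)}\|_{L^\infty(w)}\,n^{-1/2}$ and Theorem~\ref{T4.1} (applicable because $\E|X_1|^{4+\gamma}<\infty$ covers $k\le 2$) bounds the expectation of each term uniformly in $n$.

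Next I would convert this $C^2$-bound on the characteristic functions into a bound on $\ell_s(\widehat\nu_{\sigma,n},\nu_\sigma)$. Using Parseval's identity, $\int f\,d(\widehat\nu_{\sigma,n}-\nu_\sigma)=\frac1{2\pi}\int \FT f(u)\,\overline{(\FT\widehat\nu_{\sigma,n}-\FT\nu_\sigma)(u)}\,du$, and by~(\ref{4.1}), $\FT\nu_\sigma(u)=(\phi'/\phi)^2(u)-(\phi''/\phi)(u)=-\Psi''(u)$, so $\FT\widehat\nu_{\sigma,n}(u)-\FT\nu_\sigma(u)$ is a rational expression in $\widehat\phi_n^{(k)}$ and $\phi^{(k)}$, $k\le 2$, controlled near $u$ by $d^{(2)}$ divided by an appropriate power of $|\phi(u)|$ (the denominators $\phi(u)^2$ and $\phi(u)$). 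A careful expansion — writing $\widehat\phi_n^{(k)}=\phi^{(k)}+r_k$ with $|r_k(u)|w(u)\le d^{(2)}(\cdots)$ — gives, for $|u|\le U_n$ (where by construction the remainder is of smaller order than the main term, so the linearization is valid),
\[
\left|\FT\widehat\nu_{\sigma,n}(u)-\FT\nu_\sigma(u)\right|
\,=\, O_{P}\!\left(\frac{n^{-1/2}\,(1+|u|)^{?}}{w(u)\,|\phi(u;b,\nu_\sigma)|}\right),
\]
where the polynomial factor accounts for the growth of $|\phi'|,|\phi''|$ like $(1+|u|)$ and $(1+|u|)^2$ respectively (finite variance gives $\phi\in C^2$ with these bounds on derivatives in terms of $\nu_\sigma(\R)$ and $|b|$). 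For $|u|>U_n$ one uses instead the a priori bound $|\FT\widehat\nu_{\sigma,n}(u)-\FT\nu_\sigma(u)|\le \widehat\nu_{\sigma,n}(\R)+\nu_\sigma(\R)$, both of which are $O_P(1)$ since $\widehat\nu_{\sigma,n}(\R)=-\widehat\phi_n''(0)+\cdots$ is consistent. Splitting $\int\FT f(u)\overline{(\cdots)}(u)\,du$ at $U_n$ and using $f\in F_s$ (so $|\FT f(u)|\le (1+|u|)^{-s}$) yields, after optimizing, the stated bound with the supremum over $[0,U_n]$; the contribution from $|u|>U_n$ is negligible by the choice of $U_n$ and $s>0$.

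Finally I would specialize to the three decay regimes. In each case one substitutes $w(u)=(\log(e+|u|))^{-1/2-\delta}$ and the given lower bound on $|\phi(u)|$ into the definition of $U_n$ to find its order, then evaluates $\sup_{u\in[0,U_n]}(1+u)^{2-s}/(w(u)|\phi(u)|)$ times $n^{-1/2}$: in the Gaussian case $U_n\asymp\sqrt{\log n}$ and the rate becomes $(\log n)^{-s/2}$; in the exponential case $U_n\asymp\log n$ giving $(\log n)^{-s}$; in the polynomial case of order $\beta$ the balance gives $U_n\asymp ((\log n)^{1/2+2\delta}n^{-1/2})^{-1/\beta}$ and hence the rate $[(\log n)^{1/2+2\delta}n^{-1/2}]^{s/\beta}\vee n^{-1/2}$, the $n^{-1/2}$ floor coming from the $\beta=0$ degenerate case and the always-present parametric term. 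The continuous dependence of the constants on $|b|$ and $\nu_\sigma(\R)$ is tracked through the derivative bounds for $\phi$ and through the constant in Theorem~\ref{T4.1} (which depends only on the moment $\E|X_1|^{2k+\gamma}$, itself controlled by $|b|$ and $\nu_\sigma(\R)$ under the moment hypothesis).

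I expect the main obstacle to be the passage from the $d^{(2)}$-bound to the bound on $|\FT\widehat\nu_{\sigma,n}-\FT\nu_\sigma|$ uniformly up to $U_n$: one must show that $|\phi(u;\widehat b_n,\widehat\nu_{\sigma,n})|$ stays bounded away from zero on $[0,U_n]$ with high probability (so that dividing by it is legitimate), which requires using the definition of $U_n$ together with the fact that $\widehat\phi_n$ is close to $\phi$ there; handling the nonlinearity of $\log$ and the reciprocal $1/\phi$ on this random range is the delicate part, and is presumably where the factor $(1+u)^2$ rather than a smaller power enters.
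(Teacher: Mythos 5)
Your overall skeleton (triangle inequality to get $d^{(2)}(\phi_n,\phi)\le 2d^{(2)}(\widehat\phi_n,\phi)+\delta_n$ with $\phi_n:=\phi(\cdot;\widehat b_n,\widehat\nu_{\sigma,n})$, Theorem~\ref{T4.1} for the $O_P(n^{-1/2})$ size, reading off $\widehat b_n$ from $\phi_n'(0)$, Parseval plus the identity \eqref{4.1}) matches the paper, but the step you yourself flag as the ``main obstacle'' is left open, and your diagnosis of it is not how the bound actually works. For the general bound the paper never needs $|\phi_n(u)|$ bounded away from zero on $[0,U_n]$: since $\phi_n=\exp(\Psi_n)$ is itself an infinitely divisible characteristic function, one has identically $\phi_n'/\phi_n=\Psi_n'$ and $\phi_n''/\phi_n=\Psi_n''+(\Psi_n')^2$, and the decomposition
\[
\frac{\phi_n'}{\phi_n}-\frac{\phi'}{\phi}
\,=\,\Psi_n'\,\frac{\phi-\phi_n}{\phi}+\frac{\phi_n'-\phi'}{\phi},
\]
together with its second-order analogue, puts only the \emph{true} $\phi$ in the denominator. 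The deterministic bounds $|\Psi_n'(u)|\le|\widehat b_n|+|u|\,\widehat\nu_{\sigma,n}(\R)$ and $|\Psi_n''(u)|\le\widehat\nu_{\sigma,n}(\R)$, restricted to the complement of the event $A_n=\{\widehat\nu_{\sigma,n}(\R)>\nu_\sigma(\R)+1\}\cup\{|\widehat b_n|>|b|+1\}$ (whose probability is $O(n^{-1/2})$ via $\nu_\sigma(\R)=\phi'(0)^2-\phi''(0)$ and $d^{(2)}$-closeness), are exactly what produces the factor $(1+u)^2$; it does not come from controlling a reciprocal on a random range. Your proposal supplies no substitute for this argument, so the passage from the $C^2$-bound to the displayed bound with $\sup_{u\in[0,U_n]}$ is not established. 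A smaller but real slip: $f\in F_s$ does not give the pointwise bound $|\FT f(u)|\le(1+|u|)^{-s}$ (it is an $L^1$-condition); the correct step is the $L^1$--$L^\infty$ H\"older splitting $\int(1+|u|)^s|\FT f|\cdot\sup_u(1+|u|)^{-s}|\FT\widehat\nu_{\sigma,n}(u)-\FT\nu_\sigma(u)|$, after taking the minimum with $1$ and using monotonicity of $(1+u)^{-s}$ to restrict to $[0,U_n]$.

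Case (c) as you derive it is wrong. Plugging $|\phi(u)|\gtrsim(1+u)^{-\beta}$ into the general bound yields $U_n\asymp\bigl(n^{1/2}(\log n)^{-1/2-\delta}\bigr)^{1/(\beta+2)}$ and hence only the rate with exponent $s/(\beta+2)$, not $s/\beta$; your ``balance'' giving $U_n\asymp((\log n)^{1/2+2\delta}n^{-1/2})^{-1/\beta}$ does not follow from the stated bound. The paper reaches the $s/\beta$ rate only through an additional argument: Lemma~\ref{L6.1} shows that polynomial decay of $\phi$ forces $\int_{[-1,1]}|x|^\alpha\,\nu(dx)<\infty$ and hence $\sup_u|\Psi'(u)|<\infty$, and a high-probability lower bound $|\phi_n(u)|\ge\tfrac{C}{2}(1+|u|)^{-\beta}$ on $[-U_n,U_n]$ with the different choice $U_n=n^{1/(2\beta)}(\log n)^{-(1/2+2\delta)/\beta}$ (obtained from Theorem~\ref{T4.1}, not assumed) then gives $\sup_{|u|\le U_n}|\Psi_n'(u)|=O_P(1)$, which removes the $(1+u)^2$ factor on $[-U_n,U_n]$ and produces the claimed rate for $s\le\beta$, with the parametric rate for $s>\beta$. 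This is, incidentally, the one place where the lower bound on the fitted characteristic function that you worried about is genuinely needed; none of these ingredients appear in your proposal, so the case (c) rate is not proved by it.
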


\begin{rem}
The results are presented for convergence in probability, but the proof
immediately yields convergence of moments of order $1/2$ of the loss in cases
(a), (b), cf. Equation \eqref{pt42.2}. Higher moments are achieved whenever the
order of the moment bound in Theorem \ref{T4.1} can be increased.
\end{rem}

\subsection{Lower risk bounds} \label{S4.2}

We prove that the rates of convergence obtained in Theorem \ref{T4.2} for cases
(a), (b), (c) are optimal, at least up to a logarithmic factor in the latter
case. The proof in Section \ref{SPT4.4} can be naturally generalized to cover
further decay scenarios of the characteristic function.

\begin{thm} \label{T4.4}
For $C,\bar{C}>0$ large enough and for any $\alpha>0$, $\beta\ge 0$
introduce the following nonparametric classes of $\nu_\sigma$:
\begin{align*}
{\mathcal A}(C,\sigma)&:=\left\{\nu_\sigma\in{\mathcal
M}(\R)\,\Big|\,\nu_\sigma(\R)\le
C\right\} \text{ ($\sigma>0$),}\\
{\mathcal B}(C,\alpha)&:=\left\{\nu_\sigma\in{\mathcal M}(\R)
\,\Big|\,\nu_\sigma(\R)\le
C,\,|\phi(u)|\ge \bar{C} e^{-\alpha \abs{u}}\right\}\text{ ($\sigma=0$),}\\
{\mathcal C}(C,\bar{C},\beta)&:=\left\{\nu_\sigma\in{\mathcal M}(\R)
\,\Big|\,\nu_\sigma(\R)\le C,\,\abs{\phi(u)}\ge \bar{C}^{-1}
(1+\abs{u})^{-\beta}\right\}\text{ ($\sigma=0$)}.
\end{align*}
Then we obtain for some fixed $b\in\R$ and for any $s>0$ the following minimax
lower bounds, where $\widetilde\nu_{\sigma,n}$ denotes any estimator of
$\nu_\sigma$ based on $n$ observations:
\begin{align*}
\text{{\bf (a)} } & \exists\eps>0:&&
\liminf_{n\to\infty}\inf_{\tilde\nu_{\sigma,n}}\sup_{\nu_\sigma\in
{\mathcal A}(C,\sigma)} P_{b,\nu_\sigma} \Big((\log n)^{s/2}
\ell_s(\widetilde{\nu}_{\sigma,n},\nu_\sigma)>\eps\Big)>0,\\
\text{{\bf (b)} } & \exists\eps>0: &&
\liminf_{n\to\infty}\inf_{\tilde\nu_{\sigma,n}}\sup_{\nu_\sigma\in
{\mathcal B}(C,\alpha)} P_{b,\nu_\sigma}
\Big( (\log n)^{s}\ell_s(\widetilde{\nu}_{\sigma,n},\nu_\sigma)>\eps\Big)>0,\\
\text{{\bf (c)} }& \exists\eps>0:
&&\liminf_{n\to\infty}\inf_{\tilde\nu_{\sigma,n}}
\sup_{\nu_\sigma\in {\mathcal C}(C,\bar{C},\beta)}
P_{b,\nu_\sigma}\Big( n^{(s/2\beta)\wedge
(1/2)}\ell_s(\widetilde{\nu}_{\sigma,n},\nu_\sigma)
>\eps\Big)>0.\\
\end{align*}
\end{thm}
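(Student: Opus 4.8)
The plan is to prove each of the three bounds by Le Cam's two‑point method. For each case I exhibit a base parameter $\nu_\sigma^0$ and a competitor $\nu_\sigma^1$, both lying in the relevant class, such that the associated increment laws satisfy $n\,H^2(P_{b,\nu_\sigma^0},P_{b,\nu_\sigma^1})=O(1)$ for the squared Hellinger distance $H^2$ (so the two $n$‑fold products cannot be separated) while $\ell_s(\nu_\sigma^0,\nu_\sigma^1)$ is of the order of the claimed rate; the two‑point lemma then gives $\liminf_n\inf_{\tilde\nu_{\sigma,n}}\max_{j\in\{0,1\}}P_{b,\nu_\sigma^j}\bigl(\ell_s(\tilde\nu_{\sigma,n},\nu_\sigma^j)\ge\tfrac12\ell_s(\nu_\sigma^0,\nu_\sigma^1)\bigr)>0$, which in particular yields the stated assertions since $\nu_\sigma^0,\nu_\sigma^1$ belong to the respective class. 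Throughout, $b$ is held fixed: it enters $\Psi(\cdot;b,\nu_\sigma)$ only through $iub$, which cancels in $\Psi(\cdot;b,\nu_\sigma^1)-\Psi(\cdot;b,\nu_\sigma^0)$.

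\textbf{Construction.} In each case $\nu_\sigma^0$ is chosen inside the class with room to spare — $\nu_\sigma^0(\R)$ strictly below $C$, and in (b),(c) with the lower bound on $|\phi_0|:=|\phi(\cdot;b,\nu_\sigma^0)|$ holding with a better constant/exponent than required — and with a prescribed tail: a pure Gaussian (parameter $\sigma$) plus a compactly supported jump part in (a); a one‑sided measure with $\Re\Psi_0(u)\asymp-\alpha'|u|$, $\alpha'<\alpha$, in (b); and a bilateral‑gamma‑type measure $\nu_\sigma^0(dx)=c|x|e^{-\tau|x|}\,dx$ (so $\nu^0(dx)=c|x|^{-1}e^{-\tau|x|}\,dx$, which gives $|\phi_0(u)|\asymp(1+|u|)^{-2c}$, with $2c$ just below $\beta$, an increment density with tails $\asymp|x|^{c-1}e^{-\tau|x|}$, and $\phi_0$ analytic in $\{|\Im u|<\tau\}$) in (c). The competitor is $\nu_\sigma^1(dx)=(1+A_n\cos(U_nx)g(x))\,\nu_\sigma^0(dx)$ for a fixed $g\in C_c^\infty((1,2))$, $g\ge0$, a frequency $U_n\to\infty$ and an amplitude $A_n\in(0,1)$; the multiplicative form makes $\nu_\sigma^1\ge0$ automatic and leaves the Gaussian part untouched, $\nu_\sigma^1(\R)-\nu_\sigma^0(\R)$ is exponentially small in $U_n$, and (after a $U_n$‑dependent correction of $g$ of size $o(1)$ arranging $\int\rho_n(x)x^{-2}\,dx=\int\rho_n(x)x^{-1}\,dx=0$, with $\rho_n(x):=A_n\cos(U_nx)g(x)\nu_\sigma^{0\prime}(x)$) one has $\Psi_1-\Psi_0=\FT[\rho_n/x^2]$, a wave packet concentrated near $\pm U_n$ with amplitude $\asymp A_n$. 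Testing against $f_n(x)=B_n\cos(U_nx)g(x)\nu_\sigma^{0\prime}(x)$, normalised so that $f_n\in F_s$ (which forces $B_n\asymp(1+U_n)^{-s}$), and using Parseval together with $\int\cos^2(U_nx)\psi(x)\,dx\to\tfrac12\int\psi$, we get $\ell_s(\nu_\sigma^1,\nu_\sigma^0)\ge\bigl|\int f_n\,d(\nu_\sigma^1-\nu_\sigma^0)\bigr|\gtrsim A_nB_n\asymp A_n(1+U_n)^{-s}$.

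\textbf{Affinity bound — the main obstacle.} There is no density formula for $P_{b,\nu_\sigma^j}$, and the path‑space/Lévy‑triplet bounds on $H^2$ are frequency‑blind and would only produce the rate $n^{-1/2}$; so one must work with the marginal laws through their characteristic functions. Writing $p_j$ for the density of $P_{b,\nu_\sigma^j}$ and using $\phi_1-\phi_0=\phi_0(e^{\Psi_1-\Psi_0}-1)$, one has $p_1-p_0=\FT^{-1}[\phi_0(e^{\Psi_1-\Psi_0}-1)]/(2\pi)$; since $\phi_0$ varies on scale $U_n$ while $e^{\Psi_1-\Psi_0}-1\approx\Psi_1-\Psi_0$ is a packet of width $O(1)$ around $\pm U_n$, this gives $p_1-p_0\approx A_n\phi_0(U_n)\cos(U_n\cdot)\,h$ for a fixed Schwartz $h$, whence $H^2(P_{b,\nu_\sigma^0},P_{b,\nu_\sigma^1})\le\|p_1-p_0\|_{L^1}\lesssim A_n|\phi_0(U_n)|$ after controlling $\|p_1-p_0\|_{L^1}$ by $\|(1-\partial_u^2)(\phi_1-\phi_0)\|_{L^2}$ (a Sobolev/Plancherel estimate over the real line only). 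This crude bound suffices for (a) and (b). For (c) it is too weak, and one needs the quadratic estimate $H^2\le\chi^2(P_{b,\nu_\sigma^1}\|P_{b,\nu_\sigma^0})=\int(p_1-p_0)^2/p_0\lesssim A_n^2(1+U_n)^{-2\beta}$; here the work is to show, by Paley–Wiener shifts of the inversion contour (using that $|\phi_0(u+iw)|\asymp(1+|u|)^{-2c}$ for $|w|\le\tau-\eps$, and that the entire factor $e^{\Psi_1-\Psi_0}-1$ is harmless inside the strip), that $|p_1-p_0|(x)\lesssim A_n(1+U_n)^{-\beta}e^{-(\tau-\eps)|x|}$, which is summable against the lower bound $1/p_0(x)\lesssim e^{(\tau+\eps)|x|}$. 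This is exactly the step that fails for the Gaussian class (a): there $p_0$ has Gaussian tails, too thin for the $\chi^2$ integral, and the perturbation cannot be made to decay that fast — which is why only the cruder $L^1$ bound is available and, correspondingly, why the rate there is merely logarithmic.

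\textbf{Calibration and conclusion.} Fix $A_n$ so that $n\,H^2(P_{b,\nu_\sigma^0},P_{b,\nu_\sigma^1})=O(1)$ and take $U_n$ as large as the constraint $A_n\le\tfrac12$ permits. In case (a), $nA_n|\phi_0(U_n)|=O(1)$ and $|\phi_0(U_n)|\asymp e^{-\sigma^2U_n^2/2}$ force $U_n\asymp(\log n)^{1/2}$ with $A_n\asymp1$, so $\ell_s(\nu_\sigma^1,\nu_\sigma^0)\gtrsim(\log n)^{-s/2}$; in case (b), $|\phi_0(U_n)|\asymp e^{-\alpha'U_n}$ gives $U_n\asymp\log n$ and $\ell_s\gtrsim(\log n)^{-s}$; in case (c), $nA_n^2|\phi_0(U_n)|^2=O(1)$ with $|\phi_0(U_n)|\asymp(1+U_n)^{-\beta}$ gives $U_n\asymp n^{1/(2\beta)}$ and $\ell_s\gtrsim n^{-s/(2\beta)}$. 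Additionally, in case (c) the same construction with $U_n\asymp1$ (a non‑oscillating perturbation, $A_n\asymp n^{-1/2}$) yields a second lower bound $\ell_s\gtrsim n^{-1/2}$; since a minimax lower bound is dictated by the slower of the two sub‑experiments, the two combine to $n^{-(s/2\beta)\wedge(1/2)}$. Finally, $n\,H^2(P_{b,\nu_\sigma^0},P_{b,\nu_\sigma^1})=O(1)$ implies that $\|P_{b,\nu_\sigma^0}^{\otimes n}-P_{b,\nu_\sigma^1}^{\otimes n}\|_{TV}$ stays bounded away from $1$ uniformly in $n$, so the two‑point lemma gives the claimed $\liminf>0$, with $\eps$ a small multiple of the implied rate constant. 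The argument enters the decay scenario only through the calibration of $U_n$, which is why it extends to further decay profiles of $|\phi|$.
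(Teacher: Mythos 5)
Your overall architecture — a two-point (Le Cam) argument with a base law $\nu_\sigma^0$ and a perturbation oscillating at frequency $U_n$, separation $\ell_s(\nu^1_\sigma,\nu^0_\sigma)\asymp A_nU_n^{-s}$, and a calibration of $U_n$ through the decay of $\phi_0$ — is exactly the paper's strategy, and your treatment of case (c) ($\chi^2$-distance, a bilateral-Gamma-type base whose increment density has an exponential lower bound, and Parseval on contours shifted into the strip of analyticity) essentially reproduces the paper's proof. The gap is in cases (a) and (b), and it is the affinity bound, which is the heart of the matter. With a window $g\in C_c^\infty((1,2))$ the Fourier transform of the perturbation decays only sub-exponentially (by Paley--Wiener a nonzero compactly supported function cannot have exponentially decaying Fourier transform), so the ``wave packet'' $e^{\Psi_1-\Psi_0}-1$ has low-frequency leakage of size $A_n\,\tau(U_n)$ with $\tau(U_n)=e^{-o(U_n)}$; there $\abs{\phi_0}\asymp 1$, hence $\sup_u\abs{\phi_1(u)-\phi_0(u)}\gtrsim A_n\tau(U_n)$, and since $\|p_1-p_0\|_{L^1}\ge\sup_u\abs{\FT(p_1-p_0)(u)}$ and $H^2\ge\tfrac12\,\mathrm{TV}^2$, one gets $nH^2\gtrsim n A_n^2\tau(U_n)^2$. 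Under your calibration ($A_n\asymp1$, $U_n\asymp\sqrt{\log n}$ resp. $\log n$) this diverges: e.g.\ even an exponential tail $e^{-cU_n}=e^{-c\sqrt{\log n}}$ is far larger than $n^{-1/2}$. So the claimed bound $\|p_1-p_0\|_{L^1}\lesssim A_n\abs{\phi_0(U_n)}$ — the heuristic ``$p_1-p_0\approx A_n\phi_0(U_n)\cos(U_n\cdot)h$'' — is false for your construction; killing $\int\rho_n x^{-2}$ and $\int\rho_n x^{-1}$ only makes the packet vanish at $u=0$, not in a neighbourhood. The two hypotheses then separate perfectly and no lower bound follows.

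The paper's construction shows how to repair this, and also that your diagnosis of why the $\chi^2$ route ``must fail'' in case (a) is an artifact of your choice of base. The perturbation there is $\mu_K(x)=e^{-x^2/(2\rho^2)}\sin(Kx)$, whose Fourier transform is an explicit Gaussian packet $i\rho\sqrt{\pi/2}\,(e^{-\rho^2(K-u)^2/2}-e^{-\rho^2(K+u)^2/2})$, so the low-frequency leakage is $e^{-\rho^2K^2/2}$ and is negligible against $e^{-\sigma^2K^2}$ (after adjusting constants in $K_n\asymp\sqrt{\log n}$) resp.\ $e^{-2\alpha K}$. Moreover the base is taken as bilateral Gamma convolved with the Gaussian (case (a)) or with a tempered-stable factor (case (b)); the bilateral-Gamma factor forces $f_0(x)\ge c\,e^{-\gamma\abs{x}}$ even though $\phi_0$ has Gaussian or exponential decay, so the same $\chi^2$/contour-shift computation you reserve for case (c) goes through verbatim in all three cases — one bounds $\chi^2(f_K,f_0)$ by $\int e^{-\sigma^2u^2}(1+u^2)^{-\beta}(e^{-\rho^2(u-K)^2}+e^{-\rho^2(u+K)^2})\,du$ and reads off $e^{-cK^2}$, $e^{-2\alpha K}$ or $K^{-2\beta}$. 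To fix your proof you would either adopt this Gaussian-windowed perturbation (your $L^1$/Hellinger bound then does become $\lesssim A_ne^{-cU_n^2}$ and the calibration works), or switch to the paper's heavy-tailed base and run the $\chi^2$ argument uniformly; as written, cases (a) and (b) do not yield the stated rates.
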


\subsection{Discussion}\label{S4.3}

The convergence rates for $\widehat{\nu}_{\sigma,n}$ can be understood in
analogy with a deconvolution problem where the Fourier transform of
the error density decays like the characteristic function $\phi$ in
our case, see e.g.~\cit{Fan91}. The interesting point here is that
this decay property is not assumed to be known and depends on the
parameters to be estimated. At first sight, it is rather surprising
that our minimum distance estimator adapts automatically to the
decay of $\phi$, even for the whole range of loss functions
$\ell_s$, $s>0$. This is due to the fact that the noise level in the
empirical characteristic function $\widehat\phi_n$ is of the same
size for different frequencies and this is where we fit our
estimator. In contrast, when fitting the characteristic exponent
$\Psi$, which is more attractive from a computational point of view
and for example advocated in \cit{Hol05}, we face a highly
heteroskedastic noise level in $\log(\widehat{\phi}_n(u))$ governed
by $\abs{\phi(u)}^{-1}$ because of
$\log(\widehat{\phi}_n(u))-\Psi(u)\approx
(\widehat{\phi}_n(u)-\phi(u))/\phi(u)$.

Another point of view on our estimation problem is that we want to estimate the
linear functional $\int f\,d\nu_\sigma$ based on an inverse problem setting for
estimating $\nu_\sigma$. In an abstract Hilbert scale context, adaptive
estimation  for this has been considered by \cit{GP03} and their rate for the
polynomially ill-posed case reads in our notation
$(n/\log(n))^{-(r+s)/(2r+2\beta)}\vee n^{-1/2}$, with $r$ the regularity of
$\nu_\sigma$, $s$ the regularity of $f$ and $\beta$ the degree of
ill-posedness. In our case, we measure the regularity $s$ of $f$ in the Fourier
domain by an $L^1$-criterion such that a dual $L^\infty$-criterion for the
regularity of $\nu_\sigma$ yields $r=0$ because $\norm{\FT \nu_\sigma}_\infty$
is finite. Hence, the rate $(n/\log(n))^{-s/2\beta}\vee n^{-1/2}$, up to the
logarithmic factor of power $\delta$, obtained in case (c) of Theorem
\ref{T4.2}, confirms this analogy. We suspect that the gap by a logarithmic
factor in the polynomial case between our upper and lower bound is mainly due
to a suboptimal lower bound, because $\ell_s$ can be expressed in the Fourier
domain via
\[ \ell_s(\widehat\mu,\mu)=\sup_{f\in F_s}\babs{\int \FT f(u)
\overline{\FT(\widehat\mu-\mu)(u)}\,du}=\sup_{u\in\R}(1+\abs{u})^{-s}\abs{\FT(\widehat\mu-\mu)(u)},
\]
giving a supremum-type norm.

It is certainly remarkable that no regularisation parameter is
involved in our estimation procedure which becomes more intuitive by
noticing that the results of Section~\ref{S3} imply consistency
already for $s=0$. On the other hand, better rates of convergence
can be obtained when we restrict the model to measures $\nu_\sigma$
which have a regular Lebesgue density~$g_\sigma$. A natural plug-in
approach yields the kernel-type estimator
$\widehat{g}_{\sigma,n,h}(x):=K_h\ast\widehat{\nu}_{\sigma,n}(x)$,
convolving the minimum-distance estimator with a smooth kernel $K_h$
of bandwidth $h>0$. Noting that $\int f
\widehat{g}_{\sigma,n,h}=\int (f\ast
K_h)\,d\widehat{\nu}_{\sigma,n}$, we infer that the bound on the
stochastic error
\[\babs{\int f (\widehat{g}_{\sigma,n,h}-K_h\ast
g_\sigma)}=\babs{\int (f\ast K_h)
d(\widehat\nu_{\sigma,n}-\nu_\sigma)}
\]
is controlled by the regularity of $f\ast K_h$. To be
more specific, consider a function $f$ with $\abs{\FT f(u)}\asymp
(1+\abs{u})^{-s-1}$ (e.g. $f(x)=e^{-\abs{x}}$ with $s=1$), suppose
$\sup_u(1+\abs{u})^r\abs{\FT g_\sigma(u)}<\infty$ for $r>0$ and
assume polynomial decay of order $\beta\ge s$ of the characteristic
function. Then $\int (1+\abs{u})^\beta \abs{\FT f(u)\FT
K_h(u)}\,du\asymp h^{s-\beta}$ holds such that $ch^{-s+\beta} f\ast
K_h$ lies in $F_{\beta}$, $c>0$ some small constant, and Theorem
\ref{T4.2} implies that
\[
\babs{\int f(\widehat{g}_{\sigma,n,h}-K_h\ast g_\sigma)}  \,=\, O_P\left(
h^{s-\beta}n^{-1/2} (\log n)^{1/2+2\delta}\right).
\]
Together with an easy bias estimate of order $h^{s+r}$ this yields for the
estimation error $\abs{\int f(\widehat{g}_{\sigma,n,h}- g_\sigma)}$ up to
logarithmic factors the rate $n^{-(r+s)/(2r+2\beta)}$, provided the bandwidth
is chosen in an optimal way. We conclude that our results also allow to obtain
risk bounds under smoothness restrictions, which are coherent with the abstract
results in \cit{GP03}. The rates should also be compared with the case of
continuous-time observations on $[0,T]$, where \cit{FH06} obtained the
classical nonparametric rate $T^{-r/(2r+1)}$ for estimating $g_\sigma$ on a
bounded interval.

\section{Implementation}\label{S4bis}

Although the main focus of our work is theoretical, we point out how
the minimum distance estimator can be implemented and show a
numerical example. The main computational problem is that the
procedure requires to minimize a nonlinear functional over the space
of all finite measures. One possibility is to use a global
optimisation procedure, e.g. based on simulated annealing, cf.
\cit{HY03} for an application to minimum-distance fits based on
characteristic functions. Here we shall look for a good preliminary
estimator and minimize the $d^{(2)}$-criterion locally around this
pilot estimator, which turns out to be more stable in simulations than global
optimisation routines.

We use the identification formula \eqref{4.1} to build a first-stage
plug-in estimator $(\widetilde b_n,\widetilde\nu_{\sigma,n})$. While
the mean $b$ will be easily estimated by
\begin{displaymath}
\widetilde{b}_n \,:=\, \frac{1}{n} \sum_{t=1}^n (X_t-X_{t-1})
\,=\, X_n/n,
\end{displaymath}
we have to be more careful with an estimator of $\nu_\sigma$. Since
$\FT\nu_\sigma(u)=\phi''(u)/\phi(u)-(\phi'(u)/\phi(u))^2$ one might
be tempted to estimate its Fourier transform just by plugging in the
empirical characteristic function $\widehat{\phi}_n$ for~$\phi$. It
turns out, however, that the occurrence of $\widehat{\phi}_n(u)$ in
the denominator might have unfavorable effects, particularly if
$|\phi(u)|$ is small. To get some intuition for a possible remedy,
consider the problem of estimating $1/\phi(u)$.
$1/\widehat{\phi}_n(u)$ is certainly a good estimator as long as
$|\widehat{\phi}_n(u)|$ is not too small. On the other hand, since
the noise level of $\widehat{\phi}_n(u)$ is $O(n^{-1/2})$ we should
no longer rely on $1/\widehat{\phi}_n(u)$ if
$\widehat{\phi}_n(u)=O(n^{-1/2})$. To take this into account, one
can use $\1_{\{\abs{\hat\phi_n(u)}\ge\kappa
n^{-1/2}\}}/\widehat{\phi}_n(u)$ as an estimator for $1/\phi(u)$
which can be proven to satisfy
\begin{displaymath}
\E_{b,\nu_\sigma} \left|
\frac{ \1_{\{\abs{\hat{\phi}_n(u)}\ge\kappa n^{-1/2}\}} }{ \widehat{\phi}_n(u) }
\,-\, \frac{1}{\phi(u)} \right|^p 
\,=\, O\left( \left( \frac{n^{-1/2}}{|\phi(u)|^2} \wedge \frac{1}{|\phi(u)|}
\right)^p \right),
\end{displaymath}
for any positive threshold value~$\kappa$ and all $p\in\N$.
This is what we can at best expect from an estimator of $1/\phi(u)$.
Using this idea we define our preliminary estimator of $\FT\nu_\sigma(u)$ by
\begin{equation}\label{5.1}
\FT\widetilde{\nu}_{\sigma,n}(u)
\,:=\, \left( \frac{\widehat{\phi}_n''(u)}{\widehat{\phi}_n(u)}
\,-\, \left( \frac{\widehat{\phi}_n'(u)}{\widehat{\phi}_n(u)} \right)^2 \right)
\1_{\{\abs{\hat{\phi}_n(u)}\ge\kappa n^{-1/2}\}},
\end{equation}
where $\kappa$ is a positive constant. In Section~\ref{SPP5.1} below
we shall prove the following result.

\begin{prop}\label{P5.1}
We have $\E_{b,\nu_\sigma}(\tilde b_n-b)^2=O(n^{-1})$ and for $u\in\R$
\begin{eqnarray*}
\E_{b,\nu_\sigma} \left| \FT\widetilde{\nu}_{\sigma,n}(u) \,-\,
\FT\nu_\sigma(u) \right| 
\,=\, O\left( \left( \frac{n^{-1/2}}{|\phi(u)|} \wedge 1 \right)
\left( 1 \,+\, |\Psi'(u)|^2 \right) \right). \\
\end{eqnarray*}
\end{prop}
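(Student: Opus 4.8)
\textit{Overview.} The plan is to dispatch the two assertions in turn: the bound for $\widetilde b_n$ is immediate, while the bound for $\FT\widetilde\nu_{\sigma,n}$ is reduced to a moment estimate for the truncated plug‑in estimator of $1/\phi(u)$. For the mean, $\widetilde b_n=X_n/n=\frac1n\sum_{t=1}^n Z_t$ with $Z_t:=X_t-X_{t-1}$ i.i.d., $\E Z_1=\E[X_1]=b$ and $\Var(Z_1)=\Var(X_1)=\nu_\sigma(\R)$, so $\E_{b,\nu_\sigma}(\widetilde b_n-b)^2=\nu_\sigma(\R)\,n^{-1}$, which is the first claim.

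\textit{A lemma on $1/\phi$.} Write $A_n(u):=\{|\widehat{\phi}_n(u)|\ge\kappa n^{-1/2}\}$ and $g_n(u):=\1_{A_n(u)}/\widehat{\phi}_n(u)$. The key auxiliary estimate, sketched informally in the discussion preceding \eqref{5.1}, is that for every $p\ge1$, uniformly in $u\in\R$,
\[
\E_{b,\nu_\sigma}\babs{g_n(u)-\phi(u)^{-1}}^p\,=\,O\Bigl(\bigl(\tfrac{n^{-1/2}}{|\phi(u)|^2}\wedge\tfrac1{|\phi(u)|}\bigr)^p\Bigr).
\]
I would prove it by distinguishing $|\phi(u)|\ge2\kappa n^{-1/2}$ from $|\phi(u)|<2\kappa n^{-1/2}$. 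In the first regime one splits further according to whether $|\widehat{\phi}_n(u)-\phi(u)|\le|\phi(u)|/2$ or not: on the favourable sub‑event $|\widehat{\phi}_n(u)|\ge|\phi(u)|/2$, hence $|g_n(u)-\phi(u)^{-1}|\le2|\widehat{\phi}_n(u)-\phi(u)|/|\phi(u)|^2$, and one invokes $\E|\widehat{\phi}_n(u)-\phi(u)|^p=O(n^{-p/2})$ (Rosenthal's inequality for the average of the summands $e^{iuZ_t}-\phi(u)$, which are bounded by $2$, so no moment assumption on $Z_1$ enters here); on the complementary sub‑event one uses the deterministic bound $|g_n(u)|\le\kappa^{-1}n^{1/2}$ valid on $A_n(u)$, the inequality $|\phi(u)|^{-1}\le\tfrac12\kappa^{-1}n^{1/2}$, and the tail bound $P(|\widehat{\phi}_n(u)-\phi(u)|>|\phi(u)|/2)=O(n^{-q}|\phi(u)|^{-2q})$ for $q$ as large as needed, which renders this contribution of the required order. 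In the second regime $1/|\phi(u)|$ is of order at least $n^{1/2}$, so already the trivial bound $|g_n(u)-\phi(u)^{-1}|\le\kappa^{-1}n^{1/2}+|\phi(u)|^{-1}=O(|\phi(u)|^{-1})$ matches the right‑hand side. In particular this yields $\E|g_n(u)|^2=O(|\phi(u)|^{-2})$ and $\E|g_n(u)|^4=O(|\phi(u)|^{-4})$.

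\textit{Reduction for $\FT\widetilde\nu_{\sigma,n}$.} Using $\1_{A_n}^2=\1_{A_n}$ and the identification \eqref{4.1}, the difference $\FT\widetilde\nu_{\sigma,n}(u)-\FT\nu_\sigma(u)$ splits, up to an overall sign, into the two brackets $\bigl(\widehat{\phi}_n'(u)^2 g_n(u)^2-\phi'(u)^2\phi(u)^{-2}\bigr)$ and $\bigl(\widehat{\phi}_n''(u)g_n(u)-\phi''(u)\phi(u)^{-1}\bigr)$. In each I separate the numerator error from the denominator error, e.g.\ $\widehat{\phi}_n''g_n-\phi''\phi^{-1}=(\widehat{\phi}_n''-\phi'')g_n+\phi''(g_n-\phi^{-1})$, and analogously, by the purely algebraic identity $a^2g^2-A^2G^2=g^2(a-A)(a+A)+A^2(g-G)(g+G)$ for the square term (with $a=\widehat{\phi}_n'$, $A=\phi'$, $g=g_n$, $G=\phi^{-1}$). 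Each resulting summand is estimated by Cauchy--Schwarz from two ingredients: the variance bounds $\E|\widehat{\phi}_n^{(k)}(u)-\phi^{(k)}(u)|^2=O(n^{-1})$ for $k=0,1,2$ and $\E|\widehat{\phi}_n'(u)-\phi'(u)|^4=O(n^{-2})$, which require $\E|X_1|^4<\infty$ (in force under the hypotheses of Theorem~\ref{T4.2}), together with the $g_n$‑moment bounds from the lemma. The decisive algebraic input is that, through the distinguished logarithm, $\phi'(u)=\phi(u)\Psi'(u)$ and $\phi''(u)=\phi(u)\bigl(\Psi''(u)+\Psi'(u)^2\bigr)$ with $|\Psi''(u)|=|\FT\nu_\sigma(u)|\le\nu_\sigma(\R)$; substituting these makes the stray factors $|\phi(u)|^{-1}$ carried by $g_n$ cancel against the $|\phi(u)|$'s in $\phi',\phi''$, and produces exactly the factor $1+|\Psi'(u)|^2$ — and no higher power of $\Psi'$ — in the bound.

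\textit{Getting the minimum; the main obstacle.} To obtain the factor $\tfrac{n^{-1/2}}{|\phi(u)|}\wedge1$ rather than merely $\tfrac{n^{-1/2}}{|\phi(u)|}$ I keep for every summand two competing estimates: the one above (a power of $n^{-1/2}/|\phi(u)|$ times a power of $1+|\Psi'(u)|^2$), and a second one obtained by replacing $|g_n(u)|$ by its deterministic bound $\kappa^{-1}n^{1/2}$ on $A_n(u)$ and using only the $L^2$‑bounds on $\widehat{\phi}_n^{(k)}-\phi^{(k)}$, which gives an $O(1)$ bound (times a power of $1+|\Psi'(u)|^2$, absorbing any residual $|\phi(u)|\le1$ by the same substitution). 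Taking the smaller of the two, together with the elementary inequalities $x^2\wedge1\le x\wedge1$ and $(n^{-1/2}/|\phi|)\wedge(n^{1/2}|\phi|)\le(n^{-1/2}/|\phi|)\wedge1$, collapses every contribution to $O\bigl((\tfrac{n^{-1/2}}{|\phi(u)|}\wedge1)(1+|\Psi'(u)|^2)\bigr)$. The main difficulty is precisely this bookkeeping: the truncation event $A_n(u)$ may make $\widehat{\phi}_n(u)$ abnormally small even when $|\phi(u)|$ is not, so one cannot simply divide and Taylor‑expand; the $1/\phi$ lemma and the ``deterministic versus probabilistic'' pair of bounds must be threaded through each term while keeping the $\Psi'(u)$‑dependence at the linear‑in‑square level dictated by $\phi''=\phi(\Psi''+(\Psi')^2)$.
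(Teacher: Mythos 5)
Your proposal is correct, but it is organized differently from the paper's proof, so a comparison is in order. The paper proves the bound by an explicit case distinction between $|\phi(u)|\ge 2\kappa n^{-1/2}$ and $|\phi(u)|<2\kappa n^{-1/2}$, using in each regime a different algebraic decomposition (the analogues of \eqref{pt42.4}--\eqref{pt42.5}, with the error terms divided by $\phi$ in the first case and by $\widehat\phi_n$ in the second), the moment bounds $\E[|\widehat\phi_n(u)|^{-p}\1_{\{|\widehat\phi_n(u)|\ge\kappa n^{-1/2}\}}]=O(|\phi(u)|^{-p})$ and $\E[|\widehat\Psi_n'(u)|^p\1_{\{\cdot\}}]=O((1+|\Psi'(u)|)^p)$, and a Hoeffding bound for $P(|\widehat\phi_n(u)|<\kappa n^{-1/2})$ to control the term $|\FT\nu_\sigma(u)|\1_{\{|\widehat\phi_n(u)|<\kappa n^{-1/2}\}}$. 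You instead make rigorous, and then pivot on, the $L^p$ bound for $\1_{A_n(u)}/\widehat\phi_n(u)-1/\phi(u)$ — a statement the paper only asserts informally before \eqref{5.1} and never actually uses in its proof — and you run a single decomposition (numerator error versus denominator error, with the identity $a^2g^2-A^2G^2=g^2(a-A)(a+A)+A^2(g-G)(g+G)$), replacing the paper's case split by a per-term minimum of a ``probabilistic'' and a ``deterministic'' bound; the off-event contribution is then absorbed automatically since $g_n=0$ there. The bookkeeping does close: the only delicate summand is $2\phi\Psi' g_n^2(\widehat\phi_n'-\phi')$, whose second estimate is $O(|\Psi'(u)|\,n^{1/2}|\phi(u)|)$ rather than $O(1)$ as your prose suggests, but exactly the inequality $(n^{-1/2}/|\phi|)\wedge(n^{1/2}|\phi|)\le(n^{-1/2}/|\phi|)\wedge 1$ that you list handles it, so this is an imprecision of wording, not a gap. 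Both arguments require a finite fourth moment of $X_1$ (for $\E|\widehat\phi_n''-\phi''|$ resp.\ $\E|\widehat\phi_n''-\phi''|^2$ and $\E|\widehat\phi_n'-\phi'|^4$), which you correctly flag and which the paper uses tacitly. What each route buys: yours yields the advertised property of the thresholded estimator of $1/\phi$ as a genuine lemma and avoids the regime split, at the price of heavier term-by-term minimization; the paper's split is shorter because in the small-$|\phi(u)|$ regime every term is bounded crudely by $O((1+|\Psi'(u)|)^2)$ and the decompositions are recycled from the proof of Theorem~\ref{T4.2}.
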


This will give pointwise rates of convergence in a similar fashion
as before and serves well as a starting point of a local
optimisation routine. Note that this pilot estimator is very easy
and fast to implement. Yet, it has certain drawbacks, most
importantly $\FT\widetilde{\nu}_{\sigma,n}$ is usually not
positive semidefinite so that $\widetilde{\nu}_{\sigma,n}$ is not 
necessarily a non-negative measure.

\begin{figure}[t]
\centering
\includegraphics[width=7.3cm]{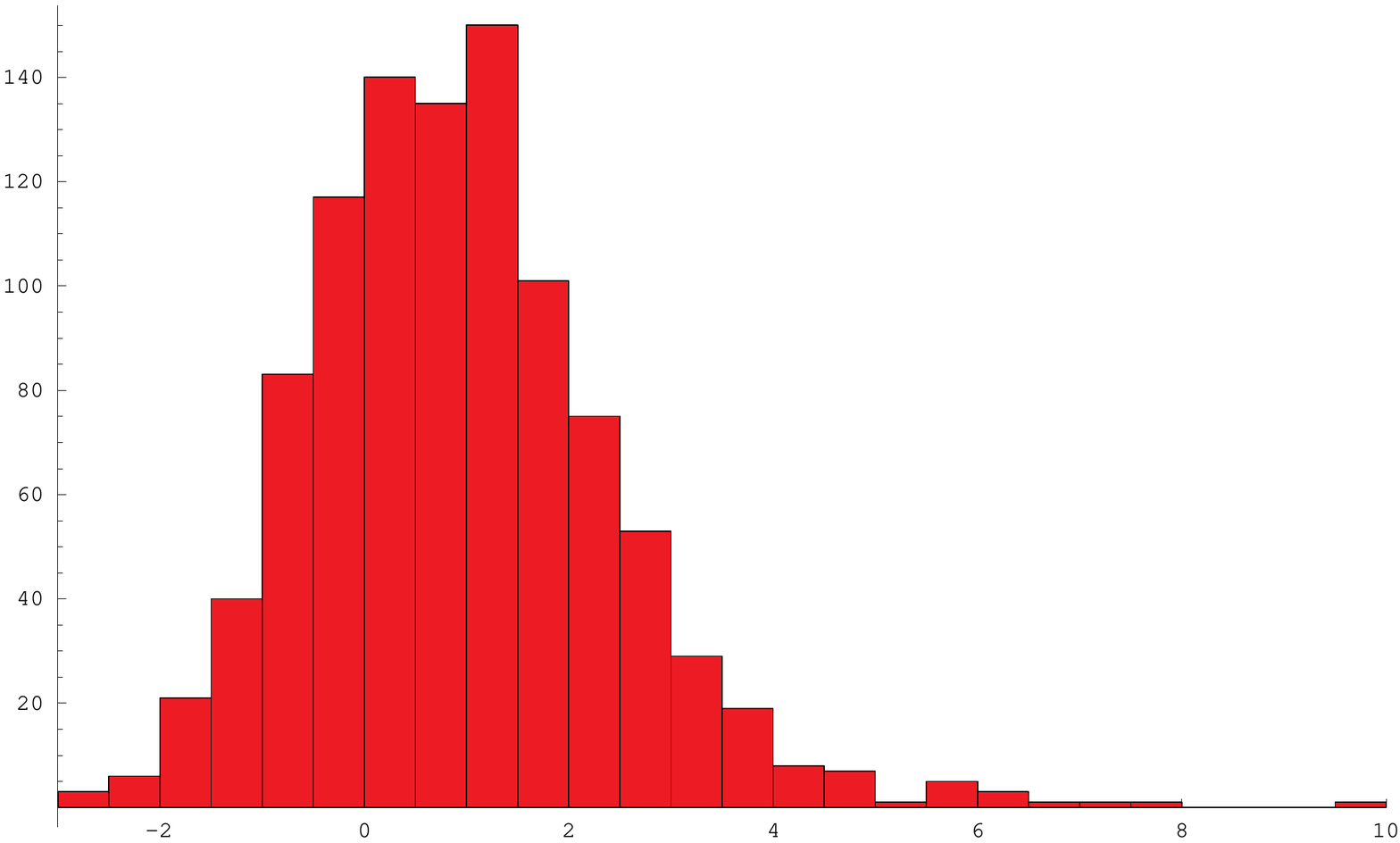}
\includegraphics[width=7.3cm]{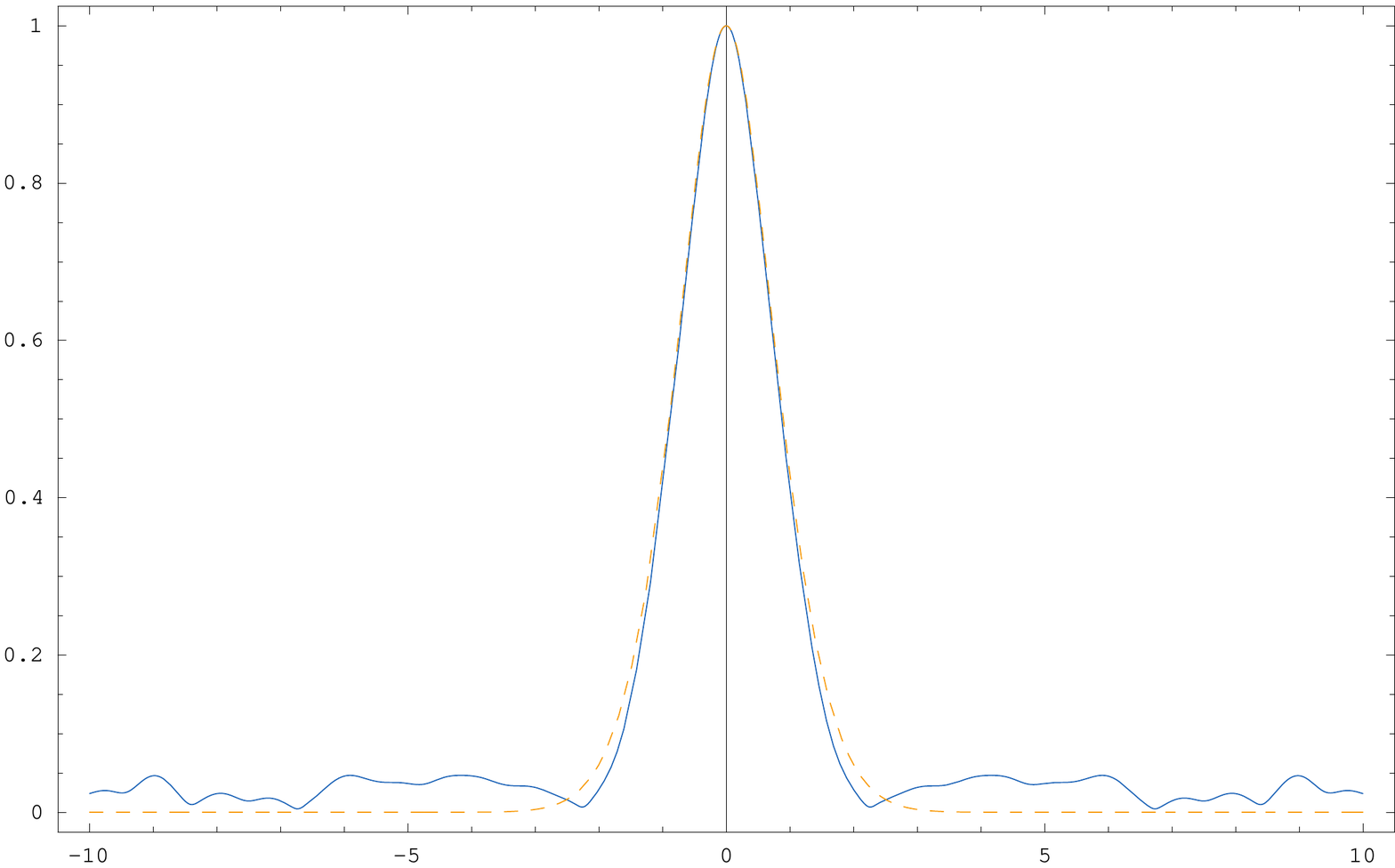}\\[-2cm]
\caption{Left: Histogram of the data. Right: modulus of the
empirical (solid blue) and true (dashed orange) characteristic
function. }\label{Figure1}
\end{figure}

In practice, our two-stage procedure works reasonably well. For a
numerical example we simulate a L\'evy process $(X_t)_{t\geq 0}$ with
$\sigma=1$, $b=1$ and $\nu(dx)=x^{-1}e^{-x}\,\1_{\{x>0\}}dx$. The process $X$ is a superposition of 
an infinite-intensity Gamma process and a standard Brownian motion. The law of
its increments $X_t-X_{t-1}$ is the convolution of an $N(0,1)$- and
an $\Exp(1)$-distribution. We have $n=1000$ observations, see Figure
\ref{Figure1}(left) for a histogram of the increments. The sample is
rather disperse with some increments close to $10$ and a sample mean
of $\tilde b_n=0.936$ (true $b=1$). The true characteristic function
has Gaussian decay and its absolute value is shown together with
that of the empirical characteristic function in Figure
\ref{Figure1}(right).

We discretize the pilot estimate $\widetilde{\nu}_{\sigma,n}$ of the jump measure
by using a Haar wavelet basis on the interval $[-10,10]$ with 15
basis functions. Moreover, we allow for a point measure in zero to
have a better resolution there. Its pilot mass is set to zero. Using
the {\tt FindMinimum} local optimisation procedure in Mathematica,
we minimize the $d^{(2)}$-criterion locally around the discretized
pilot estimator, constraining to non-negative L\'evy measures. In
Figure \ref{Figure2}(left) we display for the given data the
imaginary part of the empirical characteristic function together
with the imaginary parts of
the other characteristic functions of interest (true, pilot,
final estimator). The errors in fitting the real part are less pronounced
because there a less oscillations around zero (note $(\Re \phi)'(0)=0$). 
Typically, the pilot estimator gives already a
reasonably good fit and the final estimator has a characteristic
function which is closer to the empirical characteristic function
than the true one.

\begin{figure}[t]
\centering
\includegraphics[width=7.3cm]{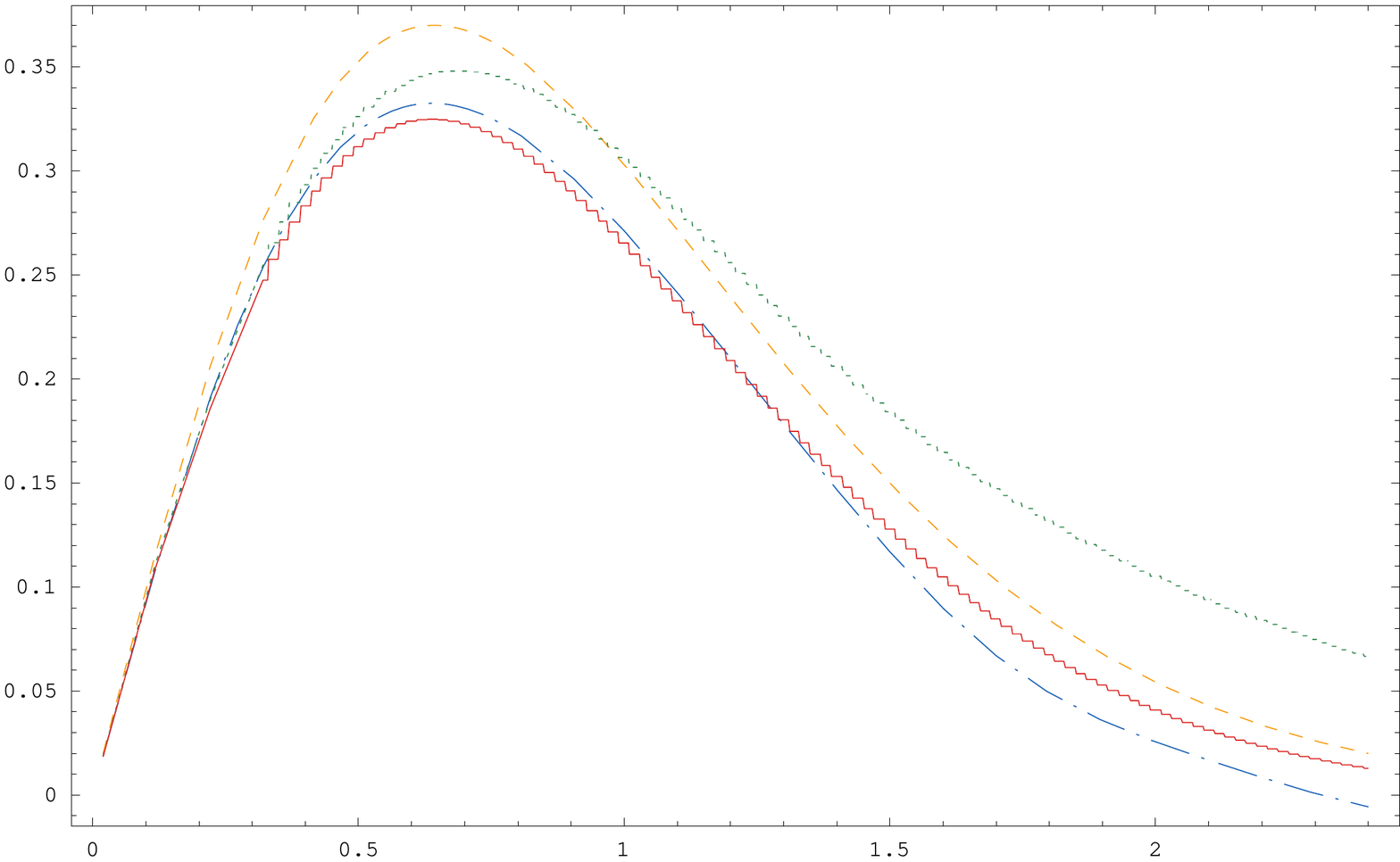}
\includegraphics[width=7.3cm]{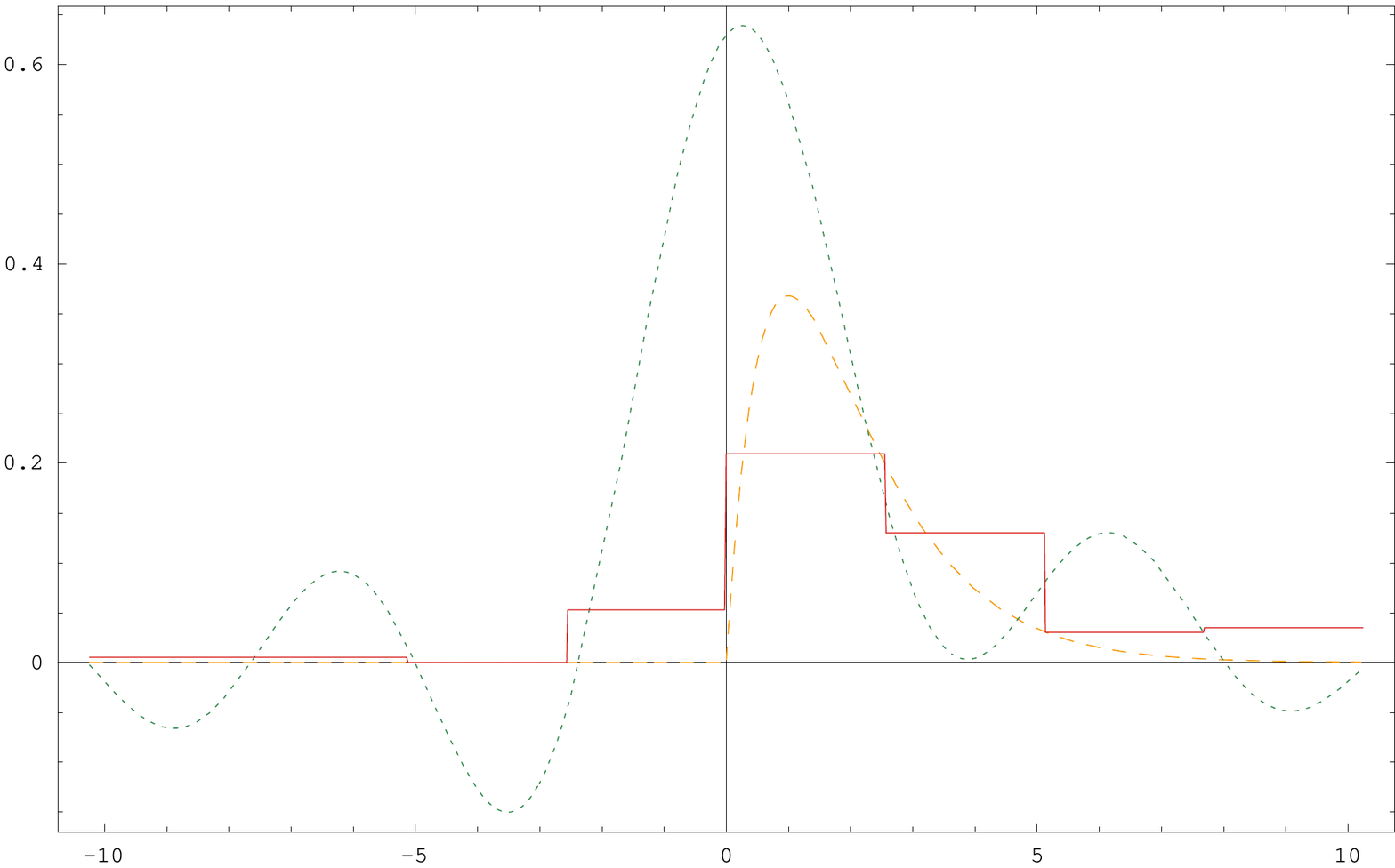}\\[-2cm]
\caption{Left: Imaginary part of the empirical (dot-dashed blue),
true (orange dashed), pilot (dotted green) and final estimated (red
solid) characteristic function. Right: pilot (dotted green), final
(red solid) estimator and true (dashed orange) density of $\nu_\sigma$; the
pilot estimator does not have a point mass in zero. }\label{Figure2}
\end{figure}

Figure \ref{Figure2}(right) finally shows the densities of the
rescaled L\'evy measures $\nu_\sigma$, but suppresses the point masses in zero. Note that
the original L\'evy density and also its plug-in estimators have a singularity at zero
because of $\nu(dx)=x^{-2}\nu_\sigma(dx)$ for $x\not=0$. The
parameters are estimated as $\widehat{b}_n=0.922$ (true $b=1$) and
$\widehat\nu_{\sigma,n}(\{0\})^{1/2}=1.092$ (true $\sigma=1$). The
pilot estimator has no point mass in zero and its density is
therefore large around zero. It is seen that the final estimator
improves upon the pilot estimator, in particular by excluding
negative values and catching the point mass in zero. Given 1000
observations and a Gaussian deconvolution problem, the estimation
problem is quite hard. The rough, step-wise form of the final
estimator is not so pleasant for the human eye, but we only want to
use this estimator as an integrator of smooth functions and, as
discussed above, we could apply a kernel to obtain a smooth density
function. As an example for a functional to be estimated, we calculated 
$\int_1^\infty x^{-2}\widehat{\nu}_\sigma(dx)$ which estimates $\nu([1,\infty))$, 
the probability of jumps larger than one. In this sample, 
the true value 0.22 was estimated by 0.16. Let us remark that the high-frequency estimator,
using the relative frequency of increments $X_{t}-X_{t-1}$ that are larger than one,
yields the estimate 0.46. The large error of the latter confirms a strong violation of the underlying 
high-frequency assumption that between two observations very rarely more than one larger jump occurs
and that the diffusion part is negligible. Hence, the frequency of the observations must indeed 
be considered as low for the construction of the estimator.

\section{Proofs}
\label{S5}

\subsection{Proof of Theorem~\ref{T4.1}}\label{SPT4.1}

We begin the proof with a few definitions. Given two functions
$l,u:\R\longrightarrow\R$ the bracket $[l,u]$ denotes the set of
functions~$f$ with $l\leq f\leq u$. For a set~$G$ of functions the
$L^2$-bracketing number~$N_{[\,]}(\eps,G)$ is the minimum number of
brackets $[l_i,u_i]$, satisfying $\E[(u_i(Z_1)\,-\,l_i(Z_1))^2]\leq
\eps^2$, that are needed to cover~$G$. The associated bracketing
integral is defined as \[ J_{[\,]}(\delta,G) \,=\, \int_0^\delta
\sqrt{ \log( N_{[\,]}(\eps,G) ) } \, d\eps. \] Furthermore, a
function $\overline{f}$ is called envelope function for~$G$, if
$|f|\leq \overline{f}$ holds for all~$f\in G$.

To apply Corollary~19.35 from \citeasnoun{vdV98}, we decompose $C_n$
in its real and imaginary parts,
\begin{eqnarray*}
\Re( C_n(u) ) & =
& n^{-1/2} \sum_{t=1}^n \left( \cos(u Z_t) \,-\, \E\cos(u Z_1)
\right), \\
\Im( C_n(u) ) & = & n^{-1/2} \sum_{t=1}^n \left( \sin(u Z_t) \,-\,
\E\sin(u Z_1) \right).
\end{eqnarray*}
Accordingly, we consider the
following class of functions:
\[
G_k \,=\, \left\{ \left. z \mapsto w(u) \tfrac{\partial^k}{\partial
u^k} \cos(uz) \right| u\in\R \right\} \cup \left\{ \left. z \mapsto
w(u) \tfrac{\partial^k}{\partial u^k} \sin(uz) \right| u\in\R
\right\}.
\]
An envelope function~$\overline{f}_k$ for~$G_k$ is given by
$\overline{f}_k=|x|^k$. Now we obtain from Corollary~19.35 in
\citeasnoun{vdV98} that
\begin{equation} \label{pl21.1}
\E \| C_n^{(k)} \|_{L_\infty(w)} \,\leq\, C \left\{
\E(\overline{f}_k(Z_1))^2 \,+\, J_{[\,]}(\sqrt{\E Z_1^{2k}}, G_k)
\right\}.
\end{equation}
Since $\E Z_1^{2k}<\infty$ it remains to bound the bracketing
integral on the right-hand side of (\ref{pl21.1}). Inspired by
\citeasnoun{Yuk85}, we proceed by setting, for every $\eps>0$,
\[
M \,:=\, M(\eps,k) \,:=\, \inf\left\{ m>0 \mid \E[Z_1^{2k}\,
\1_{\{|Z_1|>m\}}] \leq \eps^2 \right\}.
\]
Furthermore, we set, for grid points $u_j\in\R$ to be specified
below,
\begin{align*}
g_j^{\pm}(z) & = \left( w(u_j) \tfrac{\partial^k}{\partial u^k}
\cos(u_j z) \pm \eps |z|^k \right)
\1_{[-M,M]}(z) \,\pm\, \|w\|_\infty |z|^k\, \1_{[-M,M]^c}(z), \\
h_j^{\pm}(z) & = \left( w(u_j) \tfrac{\partial^k}{\partial u^k}
\sin(u_j z) \pm \eps |z|^k \right) \1_{[-M,M]}(z) \,\pm\,
\|w\|_\infty |z|^k\, \1_{[-M,M]^c}(z).
\end{align*}
We obtain for the width of the brackets that
\begin{align*}
\E\left[ \left( g_j^+(Z_1) \,-\, g_j^-(Z_1) \right)^2 \right] & \leq
\E\left[ 4 \eps^2 Z_1^{2k}\, \1_{[-M,M]}(Z_1)
\,+\, 4 \|w\|_\infty^2 Z_1^{2k} \1_{[-M,M]^c}(Z_1) \right] \\
& \le 4 \eps^2 \left( \E Z_1^{2k} \,+\, \|w\|_\infty^2 \right),
\end{align*}
and, analogously,
\[
\E\left[ \left( h_j^+(Z_1)
\,-\, h_j^-(Z_1) \right)^2 \right] \,\leq\, 4 \eps^2 \left( \E
Z_1^{2k} \,+\, \|w\|_\infty^2 \right).
\]

It remains to choose the grid points $u_j$ in such a way that the brackets
cover the set~$G_k$. We consider an arbitrary $u\in\R$ and any grid point
$u_j$. Then with the Lipschitz constant $\Lip(w)$ of the weight function $w$
\begin{eqnarray*}
\lefteqn{ \left| w(u) \tfrac{\partial^k}{\partial u^k} \cos(u z) \,-\,
w(u_j) \tfrac{\partial^k}{\partial u^k} \cos(u_j z) \right| } \\
& \leq & |z|^k \min\{ |u-u_j|(\Lip(w) \,+\, \|w\|_\infty |z|),
w(u)\,+\, w(u_j)\}.
\end{eqnarray*}
Therefore, the function $z \mapsto w(u) \tfrac{\partial^k}{\partial
u^k} \cos(uz)$ is contained in the bracket $[g_j^-,g_j^+]$ if
\[ \min\{ |u-u_j|(\Lip(w) \,+\,
\|w\|_\infty M), w(u) \,+\, w(u_j)\} \,\leq\, \eps.
\]
Consequently, we choose the grid points as
\[
u_j \,=\, j\eps/(\Lip(w) \,+\, \|w\|_\infty
M(\eps,k)),
\]
for $|j|\leq J(\eps)$, where $J(\eps)$ is the smallest integer such
that $u_{J(\eps)}$ is greater than or equal to
\[
U(\eps) \,=\, \inf\left\{ u>0\mid \sup_{v:\, |v|\geq
u} w(v) \leq \eps/2 \right\}.
\]
This yields the estimate $N_{[\,]}(\eps,G_k) \,\leq\, 2(2J(\eps) \,+\,
1)$. It follows from the generalized Markov inequality that
\[ M(\eps,k)
\,\leq\, \left( \E[|Z_1|^{2k+\gamma}]/\eps^2 \right)^{1/\gamma}.
\]
Now we obtain from the inequality
\[ J(\eps) \,\leq\,
2 U(\eps) (\Lip(w) \,+\, \|w\|_\infty M(\eps,k))/\eps \,+\, 1
\]
that $\log(N_{[\,]}(\eps,G_k))
=O(\log(J(\eps)))=O(\eps^{-(\delta+1/2)^{-1}}+\log(\eps^{-1-2/\gamma}))=
 O( \eps^{-\kappa} )$ for
$\kappa=(\delta+1/2)^{-1}<2$. This implies
\[ \int_0^\delta \sqrt{
\log(N_{[\,]}(\eps,G_k)) } \, d\eps \,<\, \infty,
\]
as required.\hfill\qed\\

\subsection{Proof of Theorem~\ref{T4.2}}\label{SPT4.2}

To simplify the notation, we use the abbreviations $\Psi_n(u)=\Psi(u;
\widehat{b}_n, \widehat{\nu}_{\sigma,n})$ and $\phi_n(u)=\exp(\Psi_n(u))$.

First of all, we obtain from the triangle inequality that
\begin{eqnarray} \label{pt42.1} d^{(2)}(\phi_n, \phi) & \leq &
d^{(2)}(\widehat{\phi}_n, \phi)
\,+\, d^{(2)}(\widehat{\phi}_n, \phi_n) \nonumber \\
& \leq & 2 d^{(2)}(\widehat{\phi}_n, \phi) \,+\, \delta_n.
\end{eqnarray}

\noindent {\em Proof for $\widehat{b}_n$}

\noindent We have that $\phi'(0)=ib$ and $\phi_n'(0)=i\widehat{b}_n$.
Therefore, we obtain from (\ref{pt42.1}) and Theorem~\ref{T4.1} that
\begin{align*}
\E_{b,\nu_\sigma} |\widehat{b}_n \,-\, b|
& = \E_{b,\nu_\sigma} |\phi_n'(0) \,-\, \phi'(0)| \\
& \le \E_{b,\nu_\sigma} d^{(2)}(\phi_n, \phi) \\
& \le 2 \E_{b,\nu_\sigma} d^{(2)}(\widehat{\phi}_n, \phi) \,+\,
\delta_n \,=\, O( n^{-1/2} ).
\end{align*}

\noindent {\em Proof for $\widehat{\nu}_{\sigma,n}$}

\noindent We consider the following set of ``unfavorable'' events:
\[ A_n
\,:=\, \left\{ \widehat{\nu}_{\sigma,n}(\R) \,>\, \nu_\sigma(\R)
\,+\, 1 \right\} \cup \left\{ |\widehat{b}_n| \,>\, |b| \,+\, 1
\right\}.
\]
{}From $\phi'(0)^2-\phi''(0)=\nu_\sigma(\R)$ and the analogous formula for
$\phi_n$ it follows that
\begin{align}
\abs{\widehat{\nu}_{\sigma,n}(\R)-\nu_\sigma(\R)}
&=\abs{(\phi'(0)^2-\phi_n'(0)^2)-(\phi''(0)
-\phi_n''(0))}\nonumber\\
&\le (2\abs{\phi'(0)}+d^{(2)}( \phi, \phi_n)+1)d^{(2)}(\phi, \phi_n),
\label{Rel-nu}
\end{align}
Consequently, the (generalized) Markov inequality yields
\begin{eqnarray*}
P_{b,\nu_\sigma}( A_n ) & \leq & \E_{b,\nu_\sigma} [
|\widehat{\nu}_{\sigma,n}(\R)
\,-\, \nu_\sigma(\R)|\wedge 1] \,+\, \E_{b,\nu_\sigma} [|\widehat{b}_n \,-\, b|] \\
& \le & \E_{b,\nu_\sigma} \left[ \left(\left(2\abs{b} +d^{(2)}( \phi, \phi_n
)+1\right)d^{(2)}( \phi, \phi_n )\right) \wedge 1\right]
\,+\, \E_{b,\nu_\sigma} \left[d^{(2)}( \phi, \phi_n ) \right]\\
& \le & \E_{b,\nu_\sigma} \left[ (4\abs{b}+2)d^{(2)}( \phi, \phi_n ) \right]
\,+\, \E_{b,\nu_\sigma} \left[d^{(2)}( \phi, \phi_n ) \right]\\
& \leq & (4\abs{b}+3) \left(\E_{b,\nu_\sigma}[d^{(2)}(\widehat{\phi}_n, \phi)]
\,+\, \delta_n\right) \,=\, O( n^{-1/2} ),
\end{eqnarray*}
which implies that
\begin{eqnarray} \label{pt42.2}
\lefteqn{ \1_{A_n} \cdot \sup_{f\in{F}_{s}} \left| \int f\,
d\widehat{\nu}_{\sigma,n} \,-\, \int f\, d\nu_\sigma
\right| } \nonumber \\
& \leq &  \1_{A_n} \cdot \sup_{f\in{F}_{s}} \|f\|_\infty \cdot \left(
\widehat{\nu}_{\sigma,n}(\R) \,+\, \nu_\sigma(\R)
\right) \nonumber \\
& \leq &  2 \; \nu_\sigma(\R) \; \1_{A_n} \,+\,
(2\abs{\phi'(0)}+d^{(2)}( \phi, \phi_n)+1)d^{(2)}(\phi, \phi_n) \nonumber \\
& = & O_{P_{b,\nu_\sigma}}( n^{-1/2} ).
\end{eqnarray}

It remains to analyse the loss under $A_n^c$. It follows from
Parseval's identity that
\begin{eqnarray} \label{pt42.3}
\lefteqn{ \left| \int f\,
d\widehat{\nu}_{\sigma,n}
\,-\, \int f\, d\nu_\sigma \right| } \nonumber \\
& = & \frac{1}{2\pi} \int_{-\infty}^\infty \FT f(u) \overline{ (\FT
\widehat{\nu}_{\sigma,n}(u) \,-\,
\FT \nu_\sigma(u)) } \, du \nonumber \\
& = & \frac{1}{2\pi} \int_{-\infty}^\infty \FT f(u) \overline{ \left\{ \left(
\big( \frac{\phi_n'(u)}{\phi_n(u)} \big)^2 \,-\, \big( \frac{\phi'(u)}{\phi(u)}
\big)^2 \right) \;-\; \left( \frac{\phi_n^{''}(u)}{\phi_n(u)} \,-\,
\frac{\phi^{''}(u)}{\phi(u)} \right) \right\} } \, du. \quad
\end{eqnarray}
The differences occurring in the integrand on the right-hand side of
(\ref{pt42.3}) can be estimated using $\phi'/\phi=\Psi'$,
$\phi_n'/\phi_n=\Psi_n'$:
\begin{align}
\label{pt42.4}
&\left| \left( \frac{\phi_n'(u)}{\phi_n(u)} \right)^2 \,-\, \left(
\frac{\phi'(u)}{\phi(u)} \right)^2 \right|  =  \left|
\frac{\phi_n'(u)}{\phi_n(u)} \,-\, \frac{\phi'(u)}{\phi(u)} \right|
\; | \Psi_n'(u) \,+\, \Psi'(u) | \nonumber \\
&\qquad\qquad\qquad\qquad \le \left\{ \left| \frac{ \phi_n(u) \,-\, \phi(u)
}{\phi(u)} \right| |\Psi_n'(u)| \,+\, \left| \frac{ \phi_n'(u) \,-\, \phi'(u)
}{\phi(u)} \right| \right\} \; | \Psi_n'(u) \,+\, \Psi'(u) |
\end{align}
and
\begin{eqnarray} \label{pt42.5}
\left| \frac{\phi_n''(u)}{\phi_n(u)} \,-\, \frac{\phi''(u)}{\phi(u)}
\right| & \leq & \left| \frac{ \phi_n(u) \,-\, \phi(u) }{\phi(u)} \right| \;
\left| \frac{\phi_n''(u)}{\phi_n(u)} \right| \,+\, \left| \frac{
\phi_n''(u) \,-\, \phi''(u) }{\phi(u)} \right|
\nonumber \\
& = & \left| \frac{ \phi_n(u) \,-\, \phi(u) }{\phi(u)} \right| \; \left|
\Psi_n''(u) \,+\, \left( \Psi_n'(u) \right)^2 \right| \,+\, \left| \frac{
\phi_n''(u) \,-\, \phi''(u) }{\phi(u)} \right|.
\end{eqnarray}
Note that the following estimates hold true under $A_n^c$:
\begin{eqnarray} \label{pt42.6}
|\Psi_n'(u)| & \leq & |\widehat{b}_n| \,+\, |u|
\widehat{\nu}_{\sigma,n}(\R)
\,\leq\, |b| \,+\, 1 \,+\, |u|(\nu_\sigma(\R) \,+\, 1), \\
\label{pt42.7} |\Psi_n''(u)| & \leq & |\FT
\widehat{\nu}_{\sigma,n}(u)| \,\leq\, \widehat{\nu}_{\sigma,n}(\R)
\,\leq\, \nu_\sigma(\R) \,+\, 1.
\end{eqnarray}
Hence, we obtain
from (\ref{pt42.3}) to (\ref{pt42.7}) and the trivial estimate $|\FT
\widehat{\nu}_{\sigma,n}(u) \,-\, \FT \nu_\sigma(u)| \leq
\widehat{\nu}_{\sigma,n}(\R) \,+\, \nu_\sigma(\R)$ that under
$A_n^c$, with some constant $C>0$,
\begin{eqnarray*}
\lefteqn{ \left| \int f \, d\widehat{\nu}_{\sigma,n}
\,-\, \int f \, d\nu_\sigma \right| } \nonumber \\
& \leq & C \; \int_{-\infty}^\infty |\FT f(u)| \; \left( \frac{
|\phi_n(u)-\phi(u)| \,+\, |\phi_n'(u)-\phi'(u)| \,+\,
|\phi_n''(u)-\phi''(u)| }{ |\phi(u)| }
(1 \,+\, |u|)^2 \;\wedge\; 1 \right) \, du \nonumber \\
& \leq & C \; \int_{-\infty}^\infty (1\,+\,|u|)^s |\FT f(u)|\, du \;\cdot\;
\sup_{u\in\R} \left\{ (1 \,+\, |u|)^{-s} \left( \frac{ (1\,+\,|u|)^2 \;
d^{(2)}(\phi_n,\phi) }{ w(u) |\phi(u)| }
\;\wedge\; 1 \right) \right\} \label{pt42.8}\\
& \leq & C \; \sup_{u\geq 0} \left\{ (1 \,+\, u)^{-s} \left( \frac{
(1\,+\,u)^2 \; n^{-1/2} }{ w(u) |\phi(u)| } \;\wedge\; 1 \right)
\right\} \; \left( n^{1/2} d^{(2)}(\phi_n,\phi) \,+\, 1 \right).
\nonumber
\end{eqnarray*}
By monotonicity of $(1+u)^{-s}$ we can replace the supremum over $[0,\infty)$
by the supremum over $[0,U_n]$ and we arrive at
\begin{eqnarray}
\label{pt42.9} \lefteqn{ \E_{b,\nu_\sigma} \left[ \1_{A_n^c} \cdot \sup_{f\in
F_{s}} \left| \int f \, d\widehat{\nu}_{\sigma,n}
\,-\, \int f \, d\nu_\sigma \right| \right] } \nonumber \\
& = & O\left( \sup_{u\in [0,U_n]} \left\{ (1\,+\,u)^{-s} \left( \frac{
(1\,+\,u)^2 n^{-1/2} }{ w(u) |\phi(u)| } \wedge 1 \right) \right\} \right).
\end{eqnarray}
Together with the bound (\ref{pt42.2}) on the set $A_n$ this yields
the asserted general estimate. Tracing back the constants, we see that they depend continuously on
$\abs{b}$ and $\nu_\sigma(\R)$.\\

\noindent {\em Proof of the rate results (a), (b)}

\noindent {\bf (a)} Under the condition $\log|\phi(u)|= -\sigma^2 u^2/2(1+o(u))$
we have $U_n\asymp \sqrt{\log n}$ and we obtain the rate $U_n^{-s}=(\log
n)^{-s/2}$.

\noindent {\bf (b)} If $|\phi(u)|\geq C e^{-\alpha u}$, then we have
$U_n\asymp \log n$ and we obtain the rate $U_n^{-s}=(\log n)^{-s}$.\\

\noindent{\em Proof of the rate result (c)}

The same reasoning as for cases (a) and (b) would only yield the
rate $((\log n)^{1/2+\delta}n^{1/2})^{-s/(\beta+2)}$ for $s\in
(0,\beta+2]$ and the parametric rate for $s>\beta+2$. In the
polynomial case (c), though, better estimates for $\abs{\Psi_n'(u)}$
hold, i.e. we can improve upon (\ref{pt42.6}). First, we formulate
and prove a lemma for $\abs{\Psi'(u)}$.

\begin{lem}\label{L6.1}
If a L\'evy process with a finite first moment has a characteristic
function (at time $t=1$) satisfying $| \phi(u) | \,\ge\, C(1 \,+\,
|u|)^{-\beta}$ for some $\beta\ge 0$, $C>0$ and all $u\in\R$, then
$\int_{[-1,+1]}\abs{x}^\alpha\nu(dx)$ is finite for all $\alpha>0$
and the derivative of its characteristic exponent is uniformly
bounded:
\[ \sup_{u\in\R}\abs{\Psi'(u)}<\infty.\]
\end{lem}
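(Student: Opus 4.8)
The plan is to first extract from the polynomial lower bound on $\abs\phi$ a logarithmic upper bound on the small--jump mass $\nu(\{\abs x\ge\eps\})$, and then to deduce both assertions by elementary integration. This first step is the only non--routine point: the idea is that \emph{averaging} the pointwise bound on the characteristic exponent over the frequency variable converts polynomial decay of $\abs\phi$ into slow growth of the small--jump mass. Concretely, $\log\abs{\phi(u)}=\Re\Psi(u)=-\sigma^2u^2/2-\int_\R(1-\cos(ux))\,\nu(dx)$, so the hypothesis $\abs{\phi(u)}\ge C(1+\abs u)^{-\beta}$ gives $\int_\R(1-\cos(ux))\,\nu(dx)\le\beta\log(1+\abs u)-\log C$ for all $u\in\R$ (and, incidentally, $\sigma=0$, since $\sigma^2u^2/2$ must also stay below this logarithmic bound). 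Averaging over $u\in[-a,a]$ and using $\frac1{2a}\int_{-a}^a(1-\cos(ux))\,du=1-\frac{\sin(ax)}{ax}$ together with Tonelli's theorem yields $\int_\R(1-\frac{\sin(ax)}{ax})\,\nu(dx)\le\beta\log(1+a)-\log C$. Since $0\le1-\frac{\sin\theta}{\theta}\le1$ for all $\theta\in\R$ and $1-\frac{\sin\theta}{\theta}\ge\frac12$ whenever $\abs\theta\ge2$ (because then $\abs{\sin\theta/\theta}\le1/\abs\theta\le\frac12$), restricting the integral to $\{\abs x\ge2/a\}$ gives $\nu(\{\abs x\ge2/a\})=O(\log(1+a))$; equivalently, there is a constant $c_1>0$ with $\nu(\{\abs x\ge\eps\})\le c_1\log(e+1/\eps)$ for all $\eps>0$. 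I expect this averaging step to be the main obstacle; once it is in place everything else is straightforward.

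For the first assertion, the layer--cake formula together with Tonelli gives
\[\int_{0<\abs x\le1}\abs x^\alpha\,\nu(dx)=\int_0^1\nu(\{t^{1/\alpha}<\abs x\le1\})\,dt\le\int_0^1 c_1\log\bigl(e+t^{-1/\alpha}\bigr)\,dt,\]
which is finite for every fixed $\alpha>0$ because $\log(e+t^{-1/\alpha})=O(\tfrac1\alpha\log(1/t))$ as $t\downarrow0$ and $\int_0^1\log(1/t)\,dt=1$. Since $\abs x^\alpha$ vanishes at $x=0$ this is exactly $\int_{[-1,1]}\abs x^\alpha\,\nu(dx)$, so the claim follows.

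For the second assertion, finiteness of the first moment gives $\int_{\abs x>1}\abs x\,\nu(dx)<\infty$, and together with $\int_{\abs x\le1}x^2\,\nu(dx)<\infty$ the bound $\abs{x(e^{iux}-1)}\le\abs x\min\{2,\abs{ux}\}$ justifies differentiating under the integral; since $\sigma=0$ this yields $\Psi'(u)=ib+i\int_\R x(e^{iux}-1)\,\nu(dx)$. The part of this integral over $\{\abs x>1\}$ is bounded by $2\int_{\abs x>1}\abs x\,\nu(dx)$, uniformly in $u$. For the part over $\{0<\abs x\le1\}$ we may assume $\abs u\ge1$ (the case $\abs u\le1$ being bounded by $\int_{\abs x\le1}x^2\,\nu(dx)$) and split at $\abs x=1/\abs u$: on $\{0<\abs x\le1/\abs u\}$ we bound by $\abs u\int_{0<\abs x\le1/\abs u}x^2\,\nu(dx)$ and invoke the estimate $\int_{0<\abs x\le\eps}x^2\,\nu(dx)=2\int_0^\eps s\,\nu(\{s<\abs x\le\eps\})\,ds=O(\eps^2\log(1/\eps))$, which follows from Step~1 with $\eps=1/\abs u$, giving a term $O(\abs u^{-1}\log\abs u)$; on $\{1/\abs u<\abs x\le1\}$ we bound by $2\int_{1/\abs u<\abs x\le1}\abs x\,\nu(dx)$, which an entirely analogous layer--cake estimate based on Step~1 shows to be $O(1)$ uniformly in $u$. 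Collecting the three contributions proves $\sup_{u\in\R}\abs{\Psi'(u)}<\infty$.
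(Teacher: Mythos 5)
Your proof is correct, and the key step is handled by a genuinely different device than in the paper. Both arguments start from the same inequality $\int_\R(1-\cos(ux))\,\nu(dx)\le \beta\log(1+|u|)-\log C$ (and both observe that it forces $\sigma=0$), but the paper exploits it only at the dyadic frequencies $u=2^n$: on the annulus $2^{-n}\le|x|\le 2^{-n+1}$ one has $1-\cos(2^nx)\ge c:=\min_{v\in[1,2]}(1-\cos v)>0$, so $\nu(\{2^{-n}\le|x|\le 2^{-n+1}\})\le c^{-1}\bigl(\log(C^{-1})+\beta\log(1+2^n)\bigr)$, and $\int_{[-1,1]}|x|^\alpha\,\nu(dx)$ is then bounded by the convergent series $\sum_n 2^{-\alpha(n-1)}$ weighted by this logarithmic bound. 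You instead average the inequality over $u\in[-a,a]$, obtaining the tail estimate $\nu(\{|x|\ge\eps\})=O(\log(e+1/\eps))$, and then integrate it via the layer-cake formula; this is equally valid and even yields a slightly more quantitative intermediate statement (a logarithmic bound on the small-jump mass), at the cost of a somewhat longer computation. For the boundedness of $\Psi'$ the paper is shorter: once the case $\alpha=1$ gives $\int_{[-1,1]}|x|\,\nu(dx)<\infty$, the crude bound $|e^{iux}-1|\le 2$ already yields $\sup_u|\Psi'(u)|\le |b|+2\int_\R|x|\,\nu(dx)<\infty$, so your finer splitting at $|x|=1/|u|$ (which additionally shows the small-jump contribution decays like $|u|^{-1}\log|u|$) is correct but unnecessary. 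Two harmless slips: the inequality $1-\sin\theta/\theta\le 1$ you mention in passing fails for some $\theta$ (the function exceeds $1$ where $\sin\theta<0$), but you only use nonnegativity and the lower bound $1/2$ for $|\theta|\ge 2$; and for your constant $c_1$ to absorb $-\log C$ one should note $C\le 1$, which follows from evaluating the hypothesis at $u=0$.
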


\begin{proof}[Proof of Lemma~\ref{L6.1}]
Since we have necessarily $\sigma^2=0$ in the L\'evy-Khinchine
characteristic as well as $\int_{[-1,1]^c}|x|\,\nu(dx)<\infty$ from
the first moment condition, the additional property
$\int_{[-1,+1]}\abs{x}\nu(dx)<\infty$ implies
\[
\sup_{u\in\R}\abs{\Psi'(u)}
\,=\, \sup_{u\in\R}\babs{ib+\int
(e^{iux}-1)ix\,\nu(dx)}
\,\le\, \abs{b}+2\int\abs{x}\,\nu(dx)
\,<\, \infty.
\]
It therefore remains to prove the first result for any $\alpha>0$.
We obtain with $c:=\min_{u\in [1,2]}(1-\cos(u))>0$:
\begin{align*}
\int_{[-1,+1]} \abs{x}^\alpha \nu(dx) &\le \sum_{n=1}^\infty
\int_{\{x:\,2^{-n}\le\abs{x}\le 2^{-n+1}\}} \abs{x}^{\alpha}
\nu(dx)\\
&\le  \sum_{n=1}^\infty 2^{-\alpha (n-1)}
\int_{\{x:\,2^{-n}\le\abs{x}\le 2^{-n+1}\}}
c^{-1}(1-\cos(2^nx))\, \nu(dx)\\
&\le c^{-1}\sum_{n=1}^\infty 2^{-\alpha (n-1)}\Re(-\Psi(2^n))\\
&\le c^{-1}\sum_{n=1}^\infty 2^{-\alpha (n-1)}
\left(\log(C^{-1})+\beta\log(1 \,+\, 2^n)\right).
\end{align*}
This latter series is obviously finite.
\end{proof}

Resuming the proof for case (c), we remark that $\abs{\phi(u)}\ge
C(1+\abs{u})^{-\beta}$ implies for any $U>0$
\begin{align*}
P_{b,\nu_\sigma}&\left(\exists u\in [-U,U]:\: \abs{\phi_n(u)} < \frac{C}{2}(1
\,+\, |u|)^{-\beta}\right)\\
&\le P_{b,\nu_\sigma}\left(\sup_{\abs{u}\le
U}\abs{\phi_n(u)-\phi(u)}(1+\abs{u})^\beta\ge C/2\right)\\
&\le \frac{2}{C}
\E[\norm{\phi_n-\phi}_{L^\infty(w)}]w(U)^{-1}(1+U)^\beta
=O(n^{-1/2}w(U)^{-1}(1+U)^\beta).
\end{align*}
Consequently, for $U_n\to\infty$ with $w(U_n)^{-1}U_n^\beta=o(n^{1/2})$ we have
\begin{equation}\label{BoundPhi-n}
\lim_{n\to\infty}P_{b,\nu_\sigma}\left(\forall u\in [-U_n,U_n]:\:
\abs{\phi_n(u)} \ge \frac{C}{2}(1 \,+\, |u|)^{-\beta}\right)=1 ;
\end{equation}
in the sequel we shall work with $U_n=n^{1/(2\beta)}(\log
n)^{-(1/2+2\delta)/\beta}$. Theorem~\ref{T4.1}, Lemma~\ref{L6.1}
and Equation~\eqref{BoundPhi-n}
then yield
\begin{eqnarray}
\label{p43.1} \sup_{\abs{u}\le U_n}|\Psi_n'(u)-\Psi'(u)| & \le &
\sup_{\abs{u}\le
U_n}\left\{\frac{|\phi_n'(u)-\phi'(u)|}{|\phi_n(u)|}+|\Psi'(u)|\frac{|\phi(u)-\phi_n(u)|}{|\phi_n(u)|}\right\}
\nonumber \\
& = & O_P(n^{-1/2})w(U_n)^{-1}\frac{2}{C}(1+|U_n|)^\beta.
\end{eqnarray}

Together with Estimate (\ref{p43.1}) and again Lemma \ref{L6.1} we have thus
established for $n\to\infty$
\begin{equation}\label{p43.2}
\sup_{\abs{u}\le U_n}
\abs{\Psi_n'(u)} \,=\, O_P\left( 1 \,+\, n^{-1/2}w(U_n)^{-1}|U_n|^\beta \right) \,=\, O_P(1).
\end{equation}
We therefore get instead of (\ref{pt42.8}) the estimate
\begin{eqnarray*}
\lefteqn{ \sup_{f\in F_{s}} \left| \int f \, d\widehat{\nu}_{\sigma,n}
\,-\, \int f \, d\nu_\sigma \right| }  \\
& = &  \sup_{u\in\R} \left\{ (1 \,+\, |u|)^{-s} \left( \frac{
(O_P(1)+u^2\,\1_{\{\abs{u}\ge U_n\}})n^{-1/2} }{ w(u) |\phi(u)| } \;\wedge\; 1
\right) \right\} \; O_P\left(
n^{1/2} d^{(2)}(\phi_n,\phi) \,+\, 1 \right)\\
& = &  \sup_{u\in\R} \left\{ (1 \,+\, |u|)^{-s} \left( \frac{
(O_P(1)+u^2\,\1_{\{\abs{u}\ge U_n\}})n^{-1/2} }{
(\log(e+\abs{u}))^{-1/2-\delta}(1+\abs{u})^{-\beta} } \;\wedge\; 1 \right)
\right\}\;O_P(1) .
\end{eqnarray*}
For $s\le\beta$ the right-hand side is of order $O_P(U_n^{-s})$ and we
obtain
\[
\sup_{f\in F_{s}} \left| \int f \, d\widehat{\nu}_{\sigma,n} \,-\, \int f \,
d\nu_\sigma \right| =O_P\left(n^{-s/2\beta}(\log n)^{s(1/2+2\delta)/\beta}\right),
\]
while for $s>\beta$ the parametric rate $O_P(n^{-1/2})$
follows.\hfill\qed\\

\subsection{Proof of Theorem \ref{T4.4}}\label{SPT4.4}

The lower bound will be established by looking at a decision problem between
two local alternatives, see e.g. \cit{KT93} for the general idea. For
$\gamma>0$ and $\beta>0$ consider the bilateral Gamma distribution which is
obtained as the law of $X-Y$ where $X$ and $Y$ are independent and both
$\Gamma(\gamma,\beta/2)$-distributed. This bilateral Gamma distribution is
infinitely divisible with the following characteristic function and L\'evy
triplet:
\[
\phi_\Gamma(u):=\left(1+\gamma^{-2}u^2\right)^{-\beta/2},\quad
b_\Gamma=0,\,\sigma_\Gamma=0,\,\nu_\Gamma(dx):=\beta
\abs{x}^{-1}e^{-\gamma\abs{x}}dx.
\]
Its density $f_\Gamma$ satisfies $f_\Gamma(x)\ge c e^{-\gamma
\abs{x}}$ for some $c>0$ \cite{KT06}. For $\sigma\ge 0$ consider the
infinitely divisible distribution with characteristic function
\begin{equation}\label{P51.1}
\phi_0(u):=\phi_\Gamma(u)e^{iub-\sigma^2 u^2/2},
\end{equation}
which has a density $f_0$ that is a convolution of $f_\Gamma$ with a
normal density and therefore still satisfies $f_0(x)\ge
c\,e^{-\gamma\abs{x}}$ with some $c>0$. The corresponding L\'evy
density satisfies $\nu_0=\nu_\Gamma$.

Let us further introduce for $K>0$ and $\rho>0$
\[
\mu_K(x):=e^{-x^2/(2\rho^2)}\sin(Kx).
\]
For any $\beta>0$ and $\gamma>0$ we can choose $\rho$ sufficiently
small such that $\nu_0(x)+\mu_K(x)\ge 0$ holds for all $K>0$. In
this case the following characteristic function also generates an
infinitely divisible distribution:
\[
\phi_K(u) \,:=\, \phi_0(u)\exp\left(\int_{\R}
(e^{iux}-1)\,\mu_K(dx)\right) \,=\, \phi_0(u)\exp(\FT\mu_K(u)).
\]
Using the fact that
$\sin(Kx)\sin(ux)=(\cos((K-u)x)-\cos((K+u)x))/2$
we obtain the following explicit calculation of the Fourier
transform of $\mu_K$:
\begin{eqnarray*}
\FT\mu_K(u) 
& = & i\int_{-\infty}^\infty
e^{-x^2/(2\rho^2)}\sin(Kx)\sin(ux)\,dx \\
& = & i\int_0^\infty e^{-x^2/(2\rho^2)} \cos((K-u)x) \, dx
\,-\, i\int_0^\infty e^{-x^2/(2\rho^2)} \cos((K+u)x) \, dx \\
& = & i\rho\sqrt{\pi/2}\; (e^{-\rho^2(K-u)^2/2}-e^{-\rho^2(K+u)^2/2}).
\end{eqnarray*}
Note that $\phi_K$ has the same decay behavior as $\phi_0$ due to
$\lim_{\abs{u}\to\infty} \FT\mu_K(u)=0$. Therefore $\nu_{0,\sigma}$
and $\nu_{K,\sigma}$ lie in the class ${\mathcal A}(C,\sigma)$
($\sigma>0$) or ${\mathcal C}(C,\bar{C},\beta)$ ($\sigma=0$),
respectively, provided $C,\bar{C}$ are large enough.

Let us now estimate the $\chi^2$-distance between the distributions
with characteristic functions $\phi_K$ and $\phi_0$:
\begin{eqnarray}
\chi^2(f_K,f_0)
& := & \int_{-\infty}^\infty \frac{(f_K(x)-f_0(x))^2}{f_0(x)}\,dx \nonumber\\
& \le & c^{-1} \int_{-\infty}^\infty
\left(e^{\gamma \abs{x}/2}f_K(x)-e^{\gamma \abs{x}/2}f_0(x)\right)^2\,dx \nonumber\\
& \le & c^{-1} \left\{ \int_{-\infty}^\infty \left(e^{\gamma
x/2}f_K(x)-e^{\gamma x/2}f_0(x)\right)^2\,dx \right. \\
& & {} \qquad \,+\, \left.
\int_{-\infty}^\infty \left(e^{-\gamma x/2}f_K(x)-e^{-\gamma
x/2}f_0(x)\right)^2\,dx \right\}. \nonumber 
\label{T51.2}
\end{eqnarray}
For functions $g$ whose Fourier
transform can be extended holomorphically to complex values $z$ with
$\abs{\Im(z)}<\gamma$ we have:
\[ \FT\left(e^{\pm\gamma x/2}g(x)\right)(u)=\int
g(x)e^{(iu\pm\gamma/2)x}dx=\FT g(u\pm(-i)\gamma/2).
\]
Using this identity in Plancherel's formula and then the estimate
$\abs{e^z-1}\le \abs{z}e^{\abs{\Re(z)}}$, $z\in\C$, together with
$\abs{\FT\mu_K(u)}\le \norm{\mu_K}_{L^1}$, we continue from
(\ref{T51.2}):
\begin{eqnarray*}
\lefteqn{ \chi^2(f_K,f_0) } \\
& \le & \frac{c^{-1}}{2\pi} \int_{-\infty}^\infty
(\abs{\phi_K(u-i\gamma/2)-\phi_0(u-i\gamma/2)}^2+\abs{\phi_K(u+i\gamma/2)-\phi_0(u+i\gamma/2)}^2)\,du\\
& = & \frac{c^{-1}}{2\pi}\int_{-\infty}^\infty e^{-\sigma^2u^2}
\babs{\frac{3}{4}+\frac{u^2}{\gamma^2}+\frac{iu}{\gamma}}^{-\beta}
\left( \abs{e^{\FT\mu_K(u-i\gamma/2)}-1}^2
\,+\,  \abs{e^{\FT\mu_K(u+i\gamma/2)}-1}^2 \right)\,du\\
& \le & \frac{e^{2\norm{\mu_K}_{L^1}}}{2c\pi} \int_{-\infty}^\infty
e^{-\sigma^2u^2}\left(\frac{3}{4}+\frac{u^2}{\gamma^2}\right)^{-\beta}
\left( \abs{\FT\mu_K(u-i\gamma/2)}^2  \,+\, \abs{\FT\mu_K(u+i\gamma/2)}^2 \right) \,du\\
& = & \frac{e^{2\norm{\mu_K}_{L^1}}\rho^2}{4c\pi}
\int_{-\infty}^\infty
e^{-\sigma^2u^2}\left(\frac{3}{4}+\frac{u^2}{\gamma^2}\right)^{-\beta}
\left(e^{-\rho^2(u-K)^2/2}-e^{-\rho^2(u+K)^2/2}\right)^2\,du.
\end{eqnarray*}
The last line is for $K\to\infty$ of order $\int_{-\infty}^\infty
e^{-\sigma^2u^2} (1+u^2)^{-\beta} (e^{-\rho^2(u-K)^2}+e^{-\rho^2(u+K)^2})\,du$.
In the case $\sigma=0$ (polynomial decay) this gives the order $K^{-2\beta}$,
whereas for $\sigma>0$ (Gaussian part) the order is $e^{-\sigma^2K^2(1+o(1))}$.

For $n$ observations the distributions do not separate provided $K^{-2\beta}\asymp
n^{-1}$ ($\sigma=0$) and $e^{-\sigma^2K^2(1+o(1))}\asymp n^{-1}$ ($\sigma>0$),
respectively. Consequently, when choosing $K_n\asymp n^{1/2\beta}$
($\sigma=0$), respectively $K_n=c \sqrt{\log(n)}$ with $c>0$ sufficiently large
($\sigma>0$), this closeness of the distributions implies \cite{KT93} that for any sequence of estimators
$(\widehat{\nu}_{\sigma,n})_n$ we have
\[
\liminf_{n\to\infty}\Big\{P_0\big(\ell_s(\widehat{\nu}_{\sigma,n},\nu_{0,\sigma})\ge\ell_s(\nu_{K_n,\sigma},\nu_{0,n})/2\big)+
P_{K_n}\big(\ell_s(\widehat{\nu}_{\sigma,n},\nu_{0,\sigma})\ge\ell_s(\nu_{K_n,\sigma},\nu_{0,n})/2\big)\Big\}>0.
\]
It remains to consider the loss $\ell_s$ between the alternatives. Using the
formula $\FT(x^2e^{-x^2/(2\rho^2)})(u)=\rho^3(1-\rho^2u^2)e^{-\rho^2u^2/2}$, we
calculate:
\begin{align*}
\ell_s(\nu_{K,\sigma},\nu_{0,\sigma})&=\sup_{f\in
F_{s}}\babs{\int_{-\infty}^\infty f(x)x^2e^{-x^2/2\rho^2}\sin(Kx)\,dx}\\
&=\frac{1}{2\pi}\sup_{f\in
F_{s}}\babs{\Im((\FT f \ast \FT(x^2e^{-x^2/2\rho^2}))(K))}\\
&=\frac{1}{2\pi}\sup_{f\in
F_{s}}\babs{\int_{-\infty}^\infty \Im(\FT f(x))\rho^3(1-\rho^2(K-u)^2) e^{-\rho^2(K-u)^2/2}\,du}\\
&=\frac{1}{2\pi}\rho^3\sup_{u\in\R}\left\{(1+\abs{u})^{-s}\abs{1-\rho^2(K-u)^2}e^{-\rho^2(K-u)^2/2}\right\}\\
&\asymp K^{-s}.
\end{align*}
Setting
$\eps:=\liminf_{n\to\infty}K_n^s\ell_s(\nu_{K,\sigma},\nu_{0,\sigma})/2>0$, we
have thus shown
\[
\liminf_{n\to\infty}\sup_{\nu_\sigma}P_{b,\nu_\sigma}(K_n^s\ell_s(\widehat{\nu}_{\sigma,n},\nu_{\sigma})\ge\eps)>0.
\]
For $\sigma=0$ (polynomial decay) this gives the desired lower bound
$K_n^{-s}=n^{-s/(2\beta)}$ for any $\beta>0$ and for $s\le\beta$.
For $s>\beta$ a standard parametric argument shows that the minimax
rate is never faster than $n^{-1/2}$. For $\sigma>0$ (Gaussian part)
we obtain the lower bound $K_n^{-s}=(\log n)^{-s/2}$, which matches
exactly the upper bound.

In the case (b), i.e. where $|\phi(u)|\ge C e^{-\alpha\abs{u}}$,
we consider instead of (\ref{P51.1})
\[ \phi_0(u)=\phi_\Gamma(u)\phi_\alpha(u),\]
where $\phi_\alpha$ is an infinitely divisible characteristic
function with $\abs{\phi(u)}\asymp e^{-\alpha\abs{u}}$ such that the
corresponding density function $f_\alpha$ has faster exponential
decay than $f_0$.  For example, a tempered stable law \cite[Prop.
4.2]{CT04} with $\nu(dx)=\alpha\abs{x}^{-2}e^{-\abs{\lambda}x}dx$
and $\lambda>0$ sufficiently large meets these requirements. The
remaining steps of the proof are exactly the same, just replace
$e^{-\sigma^2u^2/2}$ by $e^{-\alpha\abs{u}}$.
\hfill\qed\\

\subsection{Proof of Proposition \ref{P5.1}}\label{SPP5.1}

Note first that $\E_{b,\nu_\sigma}\abs{\widetilde b_n-b}^2=O(n^{-1})$ follows
directly from $\E X_1^2<\infty$.

To prove the result for the jump measure, we distinguish between two
cases. We set
$\widehat{\Psi}_n'(u)=\widehat{\phi}_n'(u)/\widehat{\phi}_n(u)$ and
$\widehat{\Psi}_n''(u)=\widehat{\phi}_n''(u)/\widehat{\phi}_n(u)
-(\widehat{\phi}_n'(u)/\widehat{\phi}_n(u))^2$.\\

\noindent{\em Case~1: $|\phi(u)|\geq 2\kappa n^{-1/2}$}

It follows from (\ref{pt42.4}) and (\ref{pt42.5}) that
\begin{eqnarray}
\label{pp51.1}
\lefteqn{ \left| \FT \widetilde{\nu}_{\sigma,n}(u)
\,-\, \FT\nu_\sigma(u) \right| } \nonumber \\
& \leq & \left\{ \left| \frac{\widehat{\phi}_n(u)-\phi(u)}{\phi(u)} \right|
|\widehat{\Psi}_n'(u)|
\,+\, \left| \frac{\widehat{\phi}_n'(u)-\phi'(u)}{\phi(u)} \right|
\left| \widehat{\Psi}_n'(u) \,+\, \Psi'(u) \right| \right\}
\1_{\{\abs{\hat{\phi}_n(u)}\geq\kappa n^{-1/2}\}} \nonumber \\
& & {} + \left\{ \left| \frac{\widehat{\phi}_n(u)-\phi(u)}{\phi(u)} \right|
\left|\widehat{\Psi}_n''(u) \,+\, (\widehat{\Psi}_n'(u))^2 \right|
\,+\, \left| \frac{\widehat{\phi}_n''(u)-\phi''(u)}{\phi(u)} \right| \right\}
\1_{\{\abs{\hat{\phi}_n(u)}\geq\kappa n^{-1/2}\}} \\
& & {} + \left| \FT \nu_\sigma(u) \right|
\1_{\{\abs{\hat{\phi}_n(u)}<\kappa n^{-1/2}\}} \nonumber \\
& = & T_{n,1} \,+\, T_{n,2} \,+\, T_{n,3}, \nonumber
\end{eqnarray}
say.

We obtain from the inequality
\begin{displaymath}
\frac{ \1_{\{\abs{\hat{\phi}_n(u)}\geq\kappa n^{-1/2}\}} }{ \widehat{\phi}_n(u) }
\,\leq\, \frac{1}{|\phi(u)|}
\,+\, \frac{ |\widehat{\phi}_n(u)\,-\,\phi(u)| }{ \kappa n^{-1/2} |\phi(u)| }
\end{displaymath}
that
\begin{equation}
\label{pp51.2} \E\left[ |\widehat{\phi}_n(u)|^{-p}\,
\1_{\{\abs{\hat{\phi}_n(u)}\geq\kappa n^{-1/2}\}} \right] \,=\,
O\left( |\phi(u)|^{-p} \right)
\end{equation}
holds for all $p\in\N$. This implies, by
$\widehat{\Psi}_n'(u)=(\widehat{\phi}_n'(u)-\phi'(u))/\widehat{\phi}_n(u)
+\Psi'(u) \phi(u)/\widehat{\phi}_n(u)$, that
\begin{equation}
\label{pp51.3}
\E\left[ \left| \widehat{\Psi}_n'(u) \right|^p
\1_{\{\abs{\hat{\phi}_n(u)}\geq\kappa n^{-1/2}\}} \right]
\,=\, O\left( (1 \,+\, |\Psi'(u)|)^p \right).
\end{equation}
Therefore, we obtain that
\begin{equation}
\label{pp51.4} \E T_{n,1} \,=\, O\left( \frac{n^{-1/2}}{|\phi(u)|}
\left( 1\,+\,|\Psi'(u)| \right)^2 \right).
\end{equation}
Since
\begin{eqnarray*}
\widehat{\Psi}_n''(u)
& = & \frac{ \widehat{\phi}_n''(u) }{ \widehat{\phi}_n(u) }
\,-\, \left( \widehat{\Psi}_n'(u) \right)^2 \\
& = & \frac{ \widehat{\phi}_n''(u) - \phi''(u) }{ \widehat{\phi}_n(u) }
\,+\, \left( \Psi''(u) + \left( \Psi'(u) \right)^2 \right)
\frac{ \phi(u) }{ \widehat{\phi}_n(u) } \,-\, \left( \widehat{\Psi}_n'(u) \right)^2
\end{eqnarray*}
we obtain, in conjunction with (\ref{pp51.2}) and (\ref{pp51.3}), that
\begin{displaymath}
\E \left[ \left| \widehat{\Psi}_n''(u) \right|^2
\1_{\{\abs{\hat{\phi}_n(u)}\geq\kappa n^{-1/2}\}} \right]
\,=\, O\left( \left( 1\,+\,|\Psi'(u)| \right)^2 \right).
\end{displaymath}
We conclude that
\begin{equation}
\label{pp51.5} \E T_{n,2} \,=\, O\left( \frac{n^{-1/2}}{|\phi(u)|}
\left( 1\,+\,|\Psi'(u)| \right)^2 \right).
\end{equation}
Finally, it follows from Hoeffding's inequality for bounded random variables that
\begin{eqnarray*}
P\left( |\widehat{\phi}_n(u)| \,<\, \kappa n^{-1/2} \right)
& \leq & P\left( |\widehat{\phi}_n(u) - \phi(u)|
\,>\, |\phi(u)| \,-\, \kappa n^{-1/2} \right) \\
& \leq & P\left( |\widehat{\phi}_n(u) - \phi(u)|
\,>\, |\phi(u)|/2 \right) \\
& \leq & \exp( -c \; n \; |\phi(u)|^2 ),
\end{eqnarray*}
for some $c>0$. This yields that $P(|\widehat{\phi}_n(u)|<\kappa
n^{-1/2})=O(n^{-1/2} |\phi(u)|^{-1})$, and therefore
\begin{equation}
\label{pp51.6} \E T_{n,3} \,=\, O\left( \frac{n^{-1/2}}{|\phi(u)|}
\right).
\end{equation}
Equations (\ref{pp51.1}), (\ref{pp51.4}), (\ref{pp51.5}), and
(\ref{pp51.6}) yield the desired bound in the case $|\phi(u)|\geq
2\kappa
n^{-1/2}$.\\

\noindent{\em Case~2: $|\phi(u)|< 2\kappa n^{-1/2}$}

In contrast to Case~1, this time we use the following decomposition:
\begin{eqnarray}
\label{pp51.11}
\lefteqn{ \left| \FT \widetilde{\nu}_{\sigma,n}(u)
\,-\, \FT\nu_\sigma(u) \right| } \nonumber \\
& \leq & \left\{ \left| \frac{\widehat{\phi}_n(u)-\phi(u)}{\widehat{\phi}_n(u)}
\right| |\Psi'(u)|
\,+\, \left| \frac{\widehat{\phi}_n'(u)-\phi'(u)}{\widehat{\phi}_n(u)} \right|
\left| \Psi'(u) \,+\, \widehat{\Psi}_n'(u) \right| \right\}
\1_{\{\abs{\hat{\phi}_n(u)}\geq\kappa n^{-1/2}\}} \nonumber \\
& & {} + \left\{ \left| \frac{\widehat{\phi}_n(u)-\phi(u)}{\widehat{\phi}_n(u)} \right|
\left|\Psi''(u) \,+\, (\Psi'(u))^2 \right|
\,+\, \left| \frac{\widehat{\phi}_n''(u)-\phi''(u)}{\widehat{\phi}_n(u)} \right| \right\}
\1_{\{\abs{\hat{\phi}_n(u)}\geq\kappa n^{-1/2}\}} \\
& & {} + \left| \FT \nu_\sigma(u) \right|
\1_{\{\abs{\hat{\phi}_n(u)}<\kappa n^{-1/2}\}}. \nonumber
\end{eqnarray}
Taking into account that $\Psi''$ is bounded and using again (\ref{pp51.3})
as well as the trivial estimate
$|\FT\nu_\sigma(u)|\leq\nu_\sigma(\R)<\infty$ we obtain that
\begin{displaymath}
\E \left| \FT \widetilde{\nu}_{\sigma,n}(u)
\,-\, \FT\nu_\sigma(u) \right|
\,=\, O\left( \left( 1\,+\,|\Psi'(u)| \right)^2 \right),
\end{displaymath}
as required.
\hfill\qed\\

\begin{ack}
We thank Peter Tankov for the idea how to prove Lemma \ref{L6.1} and
Shota Gugushvili for useful discussions and hints.
\end{ack}

\bibliographystyle{dcu}
\thebibliography{99}

\harvarditem{Basawa and Brockwell}{1982}{BB82} {\sc Basawa, I.V.}
and {\sc Brockwell, P.J.} (1982). Non-parametric estimation for
non-decreasing L\'evy processes, {\sl J. R. Statist. Soc. B } 44(2),
262--269.

\harvarditem{Belomestny and Rei\ss}{2006}{BR06} {\sc Belomestny, D.}
and {\sc Rei\ss, M.} (2006). Spectral calibration of exponential
L\'evy models. {\sl Finance Stoch.} {\bf 10}, 449--474.

\harvarditem{Cont and Tankov}{2004}{CT04} {\sc Cont, R.} and {\sc
Tankov, P.} (2004). {\sl Financial Modelling with Jump Processes}.
Chapman and Hall, Boca Raton.

\harvarditem{Chung}{1974}{Chu74} {\sc Chung, K.~L.} (1974). {\sl A
Course in Probability Theory. 2nd ed.} Academic Press, San Diego.

\harvarditem{Cs{\"o}rg{\H{o}} and Totik}{1983}{Cs83} {\sc
Cs{\"o}rg{\H{o}}, S.} and {\sc Totik, V.} (1983). On how long
interval is the empirical characteristic function uniformly
consistent. {\sl Acta Sci. Math. (Szeged)} {\bf 45}, 141--149.

\harvarditem{Dudley}{1989}{Dud89} {\sc Dudley, R.~M.} (1989). {\sl
Real Analysis and Probability}. Wadsworth, Belmont.

\harvarditem{Fan}{1991}{Fan91} {\sc Fan, J.} (1991). On the optimal
rates of convergence for nonparametric deconvolution problems. {\sl
Ann. Statist.} {\bf 19} 1257--1272.

\harvarditem{Feuerverger and McDunnough}{1981a}{FM81a} {\sc
Feuerverger, A.} and {\sc McDunnough, P.} (1981a). On the efficiency
of empirical characteristic function procedures. {\sl J. Royal
Statist. Soc.}, Ser.~B {\bf 43}, 20--27.

\harvarditem{Feuerverger and McDunnough}{1981b}{FM81b} {\sc Feuerverger, A.}
and {\sc McDunnough, P.} (1981b). On some Fourier methods for inference. {\sl
J. Amer. Statist. Assoc.} {\bf 76}, 379--387.

\harvarditem{Figueroa-L\'opez and Houdr\'e}{2006}{FH06} {\sc Figueroa-L\'opez,
J.} and {\sc Houdr\'e, C.} (2006). Risk bounds for the nonparametric estimation
of L\'evy processes in {\sl High Dimensional Probability}, IMS Lecture Notes
{\bf 51},  96--116.

\harvarditem{Gnedenko and Kolmogorov}{1968}{GK68} {\sc Gnedenko, B.
V.} and {\sc Kolmogorov, A. N.} (1968). {\sl Limit Distributions for
Sums of Independent Random Variables. 2nd ed.} Addison-Wesley,
Reading.

\harvarditem{Goldenshluger and Pereverzev}{2003}{GP03} {\sc
Goldenshluger, A.} and {\sc Pereverzev, S. V.} (2003). On adaptive
inverse estimation of linear functionals in Hilbert scales. {\sl
Bernoulli} {\bf 9}(5), 783--807.

\harvarditem{Gugushvili}{2007}{Gug07} {\sc Gugushvili, S.} (2007).
Decompounding under Gaussian noise. {\sl Math Archive} {\tt
arXiv:0711.0719v1}.

\harvarditem{Hall and Yao}{2003}{HY03} {\sc Hall, P.} and {\sc Yao,
Q.} (2003). Inference in components of variance models with low
replication. {\sl Ann. Statist.} {\bf 31}, 414--441.

\harvarditem{Jacod and Shiryaev}{2002}{JS02} {\sc Jacod, J.} and
{\sc Shiryaev, A.} (2002). {\sl Limit Theorems for Stochastic
Processes.} 2nd Edition. Grundlehren Vol. 288, Springer, Berlin.

\harvarditem{Jongbloed, van der Meulen, and van der
Vaart}{2005}{Hol05} {\sc Jongbloed, G., van der Meulen, F.H.} and
{\sc van der Vaart, A.W.} (2005). Nonparametric inference for
L\'evy-driven Ornstein-Uhlenbeck processes. {\sl Bernoulli} {\bf
11}(5), 759--791.

\harvarditem{Katznelson}{1976}{Ka76} {\sc Katznelson, Y.} (1976).
{\sl An Introduction to Harmonic Analysis}, 2nd Edition, Dover, New
York.

\harvarditem{Kolmogorov}{1932}{Kol32} {\sc Kolmogorov, A.N.} (1932). Sulla formula generale
di un processo stochastico omogeneo (Un problema di Bruno de Finetti) ({\em in Italian}), {\sl Rendiconti
della R. Accademia Nazionale dei Lincei (Ser. VI)} {\bf 15}, 866--869.

\harvarditem{Korostelev and Tsybakov}{1993}{KT93} {\sc Korostelev, A.P.} and
{\sc Tsybakov, A.B.} (1993). {\sl Minimax Theory of Image Reconstruction.}
Lecture Notes in Statistics 82, Springer, New York.

\harvarditem{K\"uchler and Tappe}{2008}{KT06} {\sc K\"uchler, U.}
and {\sc Tappe, S.} (2008). On the shapes of bilateral Gamma
densities, Stat. Proba. Letters, to appear.

\harvarditem{Mainardi and Rogosin}{2006}{MR06} {\sc Mainardi, F.} and {\sc Rogosin, S.} (2006).
The origin of infinitely-divisible distributions: from de Finetti's problem to L\'evy-Khinchine formula,
{\sl Mathematical Methods in Economics and Finance} {\bf 1}, 37--55.

\harvarditem{Nishiyama}{2007}{Nish07} {\sc Nishiyama, Y.} (2007).
Nonparametric estimation and testing time-homogeneity for processes
with independent increments. {\sl Stoch. Proc. Appl.}, to appear.

\harvarditem{van Es, Gugushvili, and Spreij}{2007}{EGS07} {\sc van
Es, B., Gugushvili, S.} and {\sc Spreij P.} (2007). A kernel-type
nonparametric density estimator for decompounding. {\sl Bernoulli}
{\bf 13}, 672--694.

\harvarditem{van der Vaart}{1998}{vdV98} {\sc van der Vaart, A.}
(1998). {\sl Asymptotic Statistics}. Cambridge University Press,
Cambridge.

\harvarditem{Watteel and Kulperger}{2003}{WK03} {\sc Watteel, R. N.}
and {\sc Kulperger, R. J.} (2003). Nonparametric estimation of the
canonical measure for infinitely divisible distributions. {\sl J.
Stat. Comput Simul.} {\bf 73}, 525--542.

\harvarditem{Yukich}{1985}{Yuk85} {\sc Yukich, J. E.} (1985). Weak
convergence of the empirical characteristic function. {\sl Proc.
Amer. Math. Soc.} {\bf 95}(3), 470--473.

\end{document}